 \theoremstyle{plain}
 \newtheorem{thm}{Theorem}
 \newtheorem{cor}{Corollary}
\theoremstyle{definition}
 \newtheorem{exmp}{Example}
\theoremstyle{remark}
 \newtheorem{rem}{Remark}
\begin{document}
\title[A generalized Suzuki
Berinde]
{ A generalized Suzuki
Berinde contraction that characterizes Banach spaces}
\author[Mujahid Abbas et al.]{Mujahid Abbas$^1$, Rizwan Anjum $^{2}$ and Vladimir Rako\v cevi\' c$^{3,\ast}$}
 \thanks{$^{\ast}$Corresponding author: V. Rakocevic}
 \maketitle
\begin{center}
{\footnotesize $^1$Department of Mathematics, Government College University Katchery
Road, Lahore 54000, Pakistan and Department of Mathematics and Applied
Mathematics, University of Pretoria Hatfield 002, Pretoria, South Africa  \\
e-mail: abbas.mujahid@gmail.com\\
$^2$Abdus Salam School of Mathematical Sciences, GC University Lahore,
Pakistan. \\[0pt]
e-mail: rizwananjum1723@gmail.com \\
$^3$Department of Mathematics, University of Ni\v s, Faculty of Sciences and Mathematics, Vi\v segradska 33, Ni\v s 18000, Serbia.\\
 e-mail:  vrakoc@sbb.rs}
\end{center}

{{\noindent {\bf Abstract.}} We introduce a large class of contractive mappings, called Suzuki Berinde
type contraction. We show that any Suzuki Berinde type contraction has a
fixed point and characterizes the completeness of the underlying normed
space. A fixed point theorem for multivalued mapping is also obtained. These
results unify, generalize and complement various known comparable results in
the literature.

 \vskip0.5cm \noindent {\bf Keywords}: Fixed point; completness; Suzuki Berinde type  contraction; multivalued mapping; fixed point.

 \noindent {\bf AMS
Subject Classification}: 46A19, 47H10.}

\commby{}

\section{Introduction and Preliminaries}

Let $(X,d)$ be a metric space. A mapping $T:X\rightarrow X$ is called
contraction mapping if there exists $r\in \lbrack 0,1)$ such that for all $%
x,y\in X,$ we have%
\begin{equation*}
d(Tx,Ty)\leq rd(x,y).
\end{equation*}%
A mapping $T:X\rightarrow X$ is called contractive mapping if for all $%
x,y\in X$ with $x\neq y,$ we have%
\begin{equation*}
d(Tx,Ty)<d(x,y).
\end{equation*}%
An element $x\in X$ is called a fixed point of $T$ if $x=Tx.$

A selfmapping $T$ is called Picard operator if there exists a unique point $%
p $ in $X$ such that $Tp=p$ and $T^{n}x\rightarrow p$ for all $x$ in $X.$

A well known Banach Contraction Principle or theorem of Picard Banach
Cassioppoli reads as follows:

\begin{thm}
\cite{Bamach}\label{.1} Let $(X,d)$ be complete metric space and $T$ a
contraction mapping on $X.$ Then $T$ is a Picard operator.
\end{thm}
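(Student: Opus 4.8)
The plan is to establish the three ingredients of the Picard property in turn: existence of a fixed point, convergence of every orbit, and uniqueness. First I would fix an arbitrary starting point $x_{0}\in X$ and form the sequence of Picard iterates $x_{n}=T^{n}x_{0}$, so that $x_{n+1}=Tx_{n}$. The whole argument rests on showing that this sequence is Cauchy; completeness will then supply a limit, and a short continuity argument will identify that limit as a fixed point.

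For the Cauchy estimate, the key step is to control consecutive distances. Applying the contraction inequality repeatedly gives $d(x_{n+1},x_{n})\leq r\,d(x_{n},x_{n-1})\leq\cdots\leq r^{n}d(x_{1},x_{0})$, an estimate I would verify by an easy induction on $n$. From here the triangle inequality yields, for $m>n$,
\begin{equation*}
d(x_{m},x_{n})\leq\sum_{k=n}^{m-1}d(x_{k+1},x_{k})\leq\Big(\sum_{k=n}^{m-1}r^{k}\Big)d(x_{1},x_{0})\leq\frac{r^{n}}{1-r}\,d(x_{1},x_{0}).
\end{equation*}
Since $0\leq r<1$, the right-hand side tends to $0$ as $n\to\infty$, so $\{x_{n}\}$ is Cauchy. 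This is where the hypothesis enters decisively: by completeness of $(X,d)$ there is a point $p\in X$ with $x_{n}\to p$.

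It remains to check that $p$ is the unique fixed point and that every orbit converges to it. Because a contraction is Lipschitz with constant $r$ it is continuous, so passing to the limit in $x_{n+1}=Tx_{n}$ gives $p=Tp$. For uniqueness, if $Tp=p$ and $Tq=q$ then $d(p,q)=d(Tp,Tq)\leq r\,d(p,q)$, whence $(1-r)\,d(p,q)\leq 0$ and therefore $p=q$. Finally, since the Cauchy argument used no special feature of $x_{0}$, the orbit of \emph{any} initial point converges, and its limit is a fixed point, hence equals $p$ by uniqueness; thus $T^{n}x\to p$ for all $x\in X$ and $T$ is a Picard operator.

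I anticipate no serious obstacle for this classical statement; the only point demanding care is the Cauchy estimate, where the geometric factor $r^{n}/(1-r)$ must be obtained uniformly in $m$ so that the tail can be made small independently of how far apart the indices lie. Completeness is then exactly the property that converts \emph{Cauchy} into \emph{convergent}, and it is clear from the argument that this is the sole place where the full strength of the hypothesis is used.
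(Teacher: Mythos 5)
Your proof is correct and complete: it is the classical Picard iteration argument (geometric Cauchy estimate, completeness to extract the limit, continuity to identify it as a fixed point, and the contraction inequality for uniqueness), which is precisely the standard proof the paper implicitly invokes by citing Banach's original work without reproducing a proof. The only point worth noting is that the paper treats this theorem as a known preliminary, so there is no internal proof to diverge from; your write-up supplies exactly the argument the citation stands for, with the key uniformity-in-$m$ of the tail estimate handled properly.
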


Define a nonincreasing function $f$ from $[0,1)$ onto $(\frac{1}{2},1]$ by
\begin{equation*}
f(r)=%
\begin{cases}
1 & \text{if $0\leq r\leq \frac{\sqrt{5}-1}{2}$}, \\
\frac{1-r}{r^{2}} & \text{if $\frac{\sqrt{5}-1}{2}<r<\frac{1}{\sqrt{2}}$},
\\
\frac{1}{1+r} & \text{if $\frac{1}{\sqrt{2}}\leq r<1.$}%
\end{cases}%
\end{equation*}

Suzzuki \cite{SUZUKI 2} proved the following Theorem, which is an
interesting generalization of the Theorem \ref{.1}.

\begin{thm}
\cite{SUZUKI 2}\label{90} Let $(X,d)$ be a complete metric space and $T$ a
selfmapping on $X.$Assume that there exists $r\in \lbrack 0,1)$ such that
for all $x,y\in X,$%
\begin{equation*}
f(r)d(x,Tx)\leq d(x,y)\text{ implies that }d(Tx,Ty)\leq rd(x,y).
\end{equation*}

Then $T$ is a Picard operator$.$
\end{thm}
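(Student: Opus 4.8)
\textit{Proof proposal.} The plan is to follow the three-stage template for Picard operators: produce a convergent Picard orbit, show its limit is fixed, and establish uniqueness; the contractive premise will be verified \emph{selectively}, only for the pairs to which we wish to apply it. First I would fix $x_{0}\in X$ and set $x_{n}=T^{n}x_{0}$. Since $f(r)\le 1$, we always have $f(r)\,d(x_{n},Tx_{n})\le d(x_{n},Tx_{n})=d(x_{n},x_{n+1})$, so the hypothesis of Theorem \ref{90} applies to the pair $(x_{n},\,y)$ with $y=x_{n+1}=Tx_{n}$ and yields $d(x_{n+1},x_{n+2})=d(Tx_{n},Ty)\le r\,d(x_{n},x_{n+1})$. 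Iterating gives $d(x_{n},x_{n+1})\le r^{n}d(x_{0},x_{1})$, whence $\{x_{n}\}$ is Cauchy by the usual geometric estimate, and completeness of $X$ provides a limit $z=\lim_{n}x_{n}$.

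Next I would show $Tz=z$. The obstruction is that one cannot feed $(z,x_{n})$ into the premise, since $f(r)\,d(z,Tz)\le d(z,x_{n})$ fails once $d(z,x_{n})$ is small (if $Tz\ne z$); instead I want the reversed pairing $(x_{n},z)$. Concretely, if the premise $f(r)\,d(x_{n},Tx_{n})\le d(x_{n},z)$ holds for infinitely many $n$, then for each such $n$ the theorem gives $d(x_{n+1},Tz)=d(Tx_{n},Tz)\le r\,d(x_{n},z)$; letting $n\to\infty$ along this set, the right-hand side tends to $0$ and $x_{n+1}\to z$, so $d(z,Tz)\le d(z,x_{n+1})+d(x_{n+1},Tz)\to 0$ and hence $Tz=z$. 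So the task reduces to a counting lemma: for every $n$ at least one of the two inequalities $f(r)\,d(x_{n},x_{n+1})\le d(x_{n},z)$ or $f(r)\,d(x_{n+1},x_{n+2})\le d(x_{n+1},z)$ must hold (this forces infinitely many good indices, since whenever one index fails its successor succeeds).

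The counting lemma is where the real difficulty sits, and it is exactly what the piecewise shape of $f$ is engineered to handle. Assuming both inequalities fail at some $n$ gives $d(x_{n},z)<f(r)\,d(x_{n},x_{n+1})$ and $d(x_{n+1},z)<f(r)\,d(x_{n+1},x_{n+2})\le f(r)\,r\,d(x_{n},x_{n+1})$; then the triangle inequality yields
\[
d(x_{n},x_{n+1})\le d(x_{n},z)+d(z,x_{n+1})<f(r)(1+r)\,d(x_{n},x_{n+1}),
\]
which is a contradiction precisely when $f(r)(1+r)\le 1$. For $\tfrac{1}{\sqrt 2}\le r<1$ one has $f(r)=\tfrac{1}{1+r}$, so $f(r)(1+r)=1$ and the contradiction closes immediately. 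In the other two regimes $f(r)(1+r)>1$ (indeed $f(r)=1$ gives $1+r$, and $f(r)=\tfrac{1-r}{r^{2}}$ gives $\tfrac{1}{r^{2}}-1>1$ for $r<\tfrac{1}{\sqrt 2}$), so this crude estimate is insufficient and must be replaced by a finer, regime-specific argument exploiting the exact values $f(r)=\tfrac{1-r}{r^{2}}$ and $f(r)=1$ together with the sharpened iterate bound $d(x_{n+1},x_{n+2})\le r\,d(x_{n},x_{n+1})$; verifying that the contradiction still closes in each of these cases is the main obstacle of the proof and the reason $f$ cannot be replaced by a simpler constant.

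Finally I would record uniqueness and global convergence. If $Tz=z$ and $Tw=w$, then $f(r)\,d(z,Tz)=0\le d(z,w)$, so the premise holds trivially for $(z,w)$ and the theorem gives $d(z,w)=d(Tz,Tw)\le r\,d(z,w)$, forcing $z=w$ since $r<1$. As the Cauchy argument of the first step applies to an \emph{arbitrary} starting point, every orbit $\{T^{n}x\}$ converges to a fixed point, and by uniqueness all these limits coincide with the single fixed point $p$; hence $T^{n}x\to p$ for all $x\in X$ and $T$ is a Picard operator.
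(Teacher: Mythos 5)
There is a genuine gap, and you have located it yourself but not closed it. Your orbit/Cauchy step, your uniqueness argument, and your treatment of the regime $\frac{1}{\sqrt{2}}\leq r<1$ (the two-index dichotomy with $f(r)(1+r)=1$, plus the subsequence converging to $Tz$) are all correct and coincide with Case iii of the argument the paper runs for Theorem \ref{bot}. But for $0\leq r<\frac{1}{\sqrt{2}}$ your proposal stops at exactly the decisive point: you note that $f(r)(1+r)>1$ there, so the triangle-inequality contradiction fails, and you defer to ``a finer, regime-specific argument'' sharpening the same counting lemma. That deferral is not a routine verification --- it is the heart of the theorem --- and, more importantly, the intended repair is misdirected: the proof does not salvage the counting lemma in those regimes at all (nothing in your proposal shows it is even true when $f(r)(1+r)>1$), and no known argument closes the small-$r$ cases through that dichotomy.

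What actually fills the gap (see the proof of Theorem \ref{bot}, Cases i--ii, which reproduces Suzuki's argument) consists of two steps you are missing. First, a ``thirds'' estimate valid for \emph{all} $r$ and requiring no dichotomy: for $x\neq z$ choose $n_{0}$ with $d(x_{n},z)\leq\frac{1}{3}d(x,z)$ for $n\geq n_{0}$; then $f(r)d(x_{n},Tx_{n})\leq d(x_{n},x_{n+1})\leq\frac{2}{3}d(x,z)\leq d(x,z)-d(x_{n},z)\leq d(x_{n},x)$, so the hypothesis applies to the pair $(x_{n},x)$ and letting $n\to\infty$ gives $d(Tx,z)\leq r\,d(x,z)$ for every $x\neq z$. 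Second, assuming $T^{k}z\neq z$ for all $k$, this yields $d(T^{k+1}z,z)\leq r^{k}d(Tz,z)$; one then proves $f(r)\,d(T^{2}z,T^{3}z)\leq d(T^{2}z,z)$ by a short contradiction using $r^{2}+r-1\leq 0$ when $f(r)=1$, respectively $f(r)r^{2}+r=1$ when $f(r)=\frac{1-r}{r^{2}}$, applies the hypothesis to the pair $(T^{2}z,z)$, and estimates $d(z,Tz)\leq d(z,T^{3}z)+d(T^{3}z,Tz)\leq r^{2}d(z,Tz)+r\,d(T^{2}z,z)\leq 2r^{2}d(z,Tz)<d(z,Tz)$, a contradiction since $2r^{2}<1$ throughout the regimes $0\leq r<\frac{1}{\sqrt{2}}$. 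Hence $T^{k}z=z$ for some $k$, and since the orbit of $z$ is Cauchy, $Tz=z$. These two ingredients, not a refined counting lemma, are what your cases (i) and (ii) require; your dichotomy is correctly reserved for case (iii) only. (Your uniqueness argument via the trivially satisfied premise $f(r)d(z,Tz)=0\leq d(z,w)$ is fine, and is even slightly simpler than deducing uniqueness from $d(Tx,z)\leq r\,d(x,z)$ as the paper does.)
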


The following is a well known Edelstein fixed point theorem .

\begin{thm}
\cite{Edel}\label{ss} Let $(X,d)$ be compact metric space and $T$ a
contractive mapping on $X.$ Then $T$ has a unique fixed point.
\end{thm}

Suzuki \cite{Suzuki} proved a variant of Edelstein Theorem \ref{ss} as
follows.

\begin{thm}
\cite{Suzuki}\label{101} Let $(X,d)$ be a compact metric space and $%
T:X\rightarrow X.$ Assume that for all $x,y\in X,$
\begin{equation*}
\frac{1}{2}d(x,Tx)<d(x,y)\text{ implies that }d(Tx,Ty)<d(x,y).
\end{equation*}

Then $T$ has a unique fixed point.
\end{thm}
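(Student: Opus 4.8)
The plan is to prove existence through an infimum argument on the displacement function $x\mapsto d(x,Tx)$, using compactness to manufacture a minimizer and then forcing the minimal displacement to vanish. Write $c=\inf\{d(x,Tx):x\in X\}$. If some point is already fixed we are done, so assume throughout that $d(x,Tx)>0$ for every $x\in X$; the aim is to reach a contradiction, thereby producing a fixed point.

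First I would record two elementary consequences of the standing hypothesis. Taking $y=Tx$, the premise $\tfrac12 d(x,Tx)<d(x,Tx)$ holds whenever $d(x,Tx)>0$, so
\[
d(Tx,T^2x)<d(x,Tx)\qquad\text{for every }x\text{ with }d(x,Tx)>0.
\]
From this I would derive a two-step alternative: for every such $x$ and every $y\in X$, at least one of $\tfrac12 d(x,Tx)<d(x,y)$ and $\tfrac12 d(Tx,T^2x)<d(Tx,y)$ must hold. Indeed, if both failed, then $d(x,y)\le\tfrac12 d(x,Tx)$ and $d(Tx,y)\le\tfrac12 d(Tx,T^2x)$, and the triangle inequality $d(x,Tx)\le d(x,y)+d(y,Tx)$ would give $d(x,Tx)\le d(Tx,T^2x)$, contradicting the displacement decrease above. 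Consequently the hypothesis applies to at least one of the pairs $(x,y)$, $(Tx,y)$, yielding $d(Tx,Ty)<d(x,y)$ or $d(T^2x,Ty)<d(Tx,y)$.

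Next I would bring in compactness. Choose a minimizing sequence $u_n$ with $d(u_n,Tu_n)\to c$; passing to subsequences (compactness applied twice) I may assume $u_n\to v$ and $Tu_n\to w$, whence $d(v,w)=c$ by continuity of the metric and $d(Tu_n,w)\to 0$. Applying the alternative with $x=u_n$ and $y=w$, one and the same of the two conclusions holds for infinitely many $n$; pass to that subsequence. If $d(Tu_n,Tw)<d(u_n,w)$ holds, then since $Tu_n\to w$ we let $n\to\infty$ to get $d(w,Tw)\le c$, hence $d(w,Tw)=c$ as $c$ is the infimum. If instead $d(T^2u_n,Tw)<d(Tu_n,w)\to 0$, then $T^2u_n\to Tw$, while $d(Tu_n,T^2u_n)<d(u_n,Tu_n)\to c$ together with $d(Tu_n,T^2u_n)\ge c$ forces $d(Tu_n,T^2u_n)\to c$; passing to the limit gives $d(w,Tw)=\lim d(Tu_n,T^2u_n)=c$. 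In either case the infimum is \emph{attained}: $d(w,Tw)=c$.

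The remaining step is short. If $c>0$, apply the hypothesis to $x=w$, $y=Tw$: the premise $\tfrac12 d(w,Tw)<d(w,Tw)$ holds, so $d(Tw,T^2w)<d(w,Tw)=c$, contradicting $d(Tw,T^2w)\ge c$. Hence $c=0$, so $d(w,Tw)=0$ and $w=Tw$ is a fixed point. For uniqueness, if $p\ne q$ were two fixed points, then $\tfrac12 d(p,Tp)=0<d(p,q)$, so the hypothesis gives $d(p,q)=d(Tp,Tq)<d(p,q)$, which is absurd. I expect the main obstacle to be the attainment of the infimum: because $T$ is not assumed continuous, the displacement function need not be continuous, and a direct application of the hypothesis to the minimizing sequence can fail, since the premise $\tfrac12 d(u_n,Tu_n)<d(u_n,w)$ need not hold. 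The two-step alternative established above is exactly the device that circumvents this difficulty.
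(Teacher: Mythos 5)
Your proof is correct, and it follows essentially the same route the paper itself uses (Theorem \ref{101} is only cited from Suzuki there, but the method is reproduced in the proof of Theorem \ref{main}): a minimizing sequence for the displacement $d(x,Tx)$, compactness applied twice to extract limits $v$ and $w$, the decrease $d(Tx,T^2x)<d(x,Tx)$, and the two-step either-or alternative obtained from the triangle inequality. The only difference is organizational: the paper first proves $\beta=\inf\tfrac12\,d(x,Tx)=0$ by an explicit $n_{0}$-argument with the bounds $\tfrac{2\beta}{3}<d(x_n,w)$ and $d(x_n,Tx_n)<\tfrac{4\beta}{3}$, and then applies the alternative at the common limit, whereas you apply the alternative once at $w$ to get attainment $d(w,Tw)=c$ and then eliminate $c>0$ by minimality --- a modest streamlining of the same argument (and you also supply the uniqueness step, which the paper's Theorem \ref{main} does not claim).
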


The aim of this paper is to prove the generalization of Theorem \ref{90} in
the setting of a Banach space and characterizes the completeness of
underlying space. Moreover, we shall also prove generalization of Theorem %
\ref{101} in the setting of a compact normed space. A multivalued fixed
point theorem is also proved which generalizes Theorem $2$ in \cite{KIKKAWA}%
, and Theorem $2.3$ in \cite{Beg} in the setting of Banach space.

\section{Main Result}

We start with the following theorem which is generalization of the Theorem $2
$ (\cite{SUZUKI 2}) in the setting of a Banach space.

\begin{thm}
\label{bot} Let $(X,||.||)$ be Banach space and $T$ a selfmapping on $X.$ If
there exists $b\in \lbrack 0,\infty )$ and $\theta \in \lbrack 0,b+1)$ with $%
\lambda =\frac{1}{b+1}$ such that for any $x,y\in X$
\begin{equation}
\psi (r)||x-Tx||\leq ||x-y||  \label{pp1}
\end{equation}%
implies that
\begin{equation}
||b(x-y)+Tx-Ty||\leq \theta ||x-y||,  \label{pp2}
\end{equation}%
where, $\ \theta \lambda =r$ and $\psi $ is a nonincreasing function from $%
[0,1)$ onto $[0,1)$ given by
\begin{equation*}
\psi (r)=%
\begin{cases}
\lambda & \text{if $0\leq r\leq \frac{\sqrt{5}-1}{2}$}, \\
\frac{\lambda (1-r)}{r^{2}} & \text{if $\frac{\sqrt{5}-1}{2}<r<\frac{1}{%
\sqrt{2}}$}, \\
\frac{\lambda }{1+r} & \text{if $\frac{1}{\sqrt{2}}\leq r<1.$}%
\end{cases}%
\end{equation*}%
Then $T$ has unique fixed point.
\end{thm}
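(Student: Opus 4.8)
The plan is to reduce the statement to Suzuki's Theorem~\ref{90} by replacing $T$ with the averaged (Krasnoselskii-type) operator
\[
U=(1-\lambda)I+\lambda T ,
\]
where $I$ is the identity on $X$ and $\lambda=\frac{1}{b+1}\in(0,1]$. First I would record the elementary identity $\lambda b=1-\lambda$, which is immediate from $\lambda=\frac{1}{b+1}$, so that $Ux=(1-\lambda)x+\lambda Tx=\lambda(bx+Tx)$ for every $x$. Consequently, for all $x,y\in X$,
\[
Ux-Uy=\lambda\bigl(b(x-y)+Tx-Ty\bigr),\qquad x-Ux=\lambda(x-Tx),
\]
and hence $\|Ux-Uy\|=\lambda\,\|b(x-y)+Tx-Ty\|$ together with $\|x-Ux\|=\lambda\,\|x-Tx\|$.

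Next I would observe that $U$ and $T$ have exactly the same fixed points: since $\lambda>0$ and $x-Ux=\lambda(x-Tx)$, we have $Ux=x\iff Tx=x$. Thus it suffices to show that $U$ has a unique fixed point. The heart of the reduction is the relation $\psi(r)=\lambda f(r)$, which I would verify by comparing the piecewise definitions of $\psi$ and of Suzuki's function $f$ on each of the three subintervals of $[0,1)$. Using this relation with $\|x-Tx\|=\lambda^{-1}\|x-Ux\|$, the hypothesis $\psi(r)\|x-Tx\|\le\|x-y\|$ becomes precisely $f(r)\|x-Ux\|\le\|x-y\|$; and using $\|Ux-Uy\|=\lambda\,\|b(x-y)+Tx-Ty\|$ together with $r=\theta\lambda$, the conclusion $\|b(x-y)+Tx-Ty\|\le\theta\|x-y\|$ becomes precisely $\|Ux-Uy\|\le r\|x-y\|$. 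Because $\theta\in[0,b+1)$ forces $r=\theta\lambda=\frac{\theta}{b+1}\in[0,1)$, the operator $U$ satisfies, on the complete metric space $(X,d)$ with $d(x,y)=\|x-y\|$, exactly the hypothesis of Theorem~\ref{90} with this same $r$.

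Finally, I would invoke Theorem~\ref{90} to conclude that $U$ is a Picard operator, and in particular has a unique fixed point; by the equality of the fixed point sets of $U$ and $T$, the map $T$ has a unique fixed point. I expect the only genuinely delicate point to be the bookkeeping needed to confirm that the substitution $U=(1-\lambda)I+\lambda T$ turns the Suzuki--Berinde implication into Suzuki's implication \emph{exactly} — that is, checking $\psi=\lambda f$ piecewise and verifying that one and the same scalar $r$ simultaneously plays the role of the Suzuki threshold parameter and of the contraction factor for $U$. Everything else is a direct translation through this single linear change of operator.
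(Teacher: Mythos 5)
Your proposal is correct and follows essentially the same route as the paper: the paper's proof likewise rests on the averaged operator $T_{\lambda}=(1-\lambda)I+\lambda T$, the identity $\psi(r)=\lambda f(r)$, and the translation of the hypothesis into Suzuki's condition $f(r)\|x-T_{\lambda}x\|\leq\|x-y\|\Rightarrow\|T_{\lambda}x-T_{\lambda}y\|\leq r\|x-y\|$ with $r=\theta\lambda\in[0,1)$. The only difference is cosmetic: for $b>0$ the paper re-runs Suzuki's completeness argument inline for $T_{\lambda}$ instead of simply citing Theorem \ref{90}, and it treats $b=0$ as a separate case, whereas you invoke Theorem \ref{90} directly and handle both cases uniformly via $\lambda\in(0,1]$ --- a legitimate and cleaner shortcut.
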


\begin{proof}
We divide the proof into the following two cases.\newline
\textsc{Case 1.} Suppose that $b>0.$ Clearly, $0<\lambda <1.$ We can write $%
\psi (r)=\lambda f(r),$ where $f$ is a nonincreasing function from $[0,1)$
onto $(\frac{1}{2},1]$ given by
\begin{equation*}
f(r)=%
\begin{cases}
1 & \text{if $0\leq r\leq \frac{\sqrt{5}-1}{2}$}, \\
\frac{1-r}{r^{2}} & \text{if $\frac{\sqrt{5}-1}{2}<r<\frac{1}{\sqrt{2}}$},
\\
\frac{1}{1+r} & \text{if $\frac{1}{\sqrt{2}}\leq r<1.$}%
\end{cases}%
\end{equation*}%
Then (\ref{pp1}) and (\ref{pp2}) become
\begin{eqnarray*}
\lambda f(r)||x-Tx|| &\leq &||x-y||\text{ and} \\
||T_{\lambda }x-T_{\lambda }y|| &\leq &r||x-y||,
\end{eqnarray*}%
respectively. In this case, an implicative contractive condition in the
statement of the Theorem reduces to the following form:
\begin{equation}
f(r)||x-T_{\lambda }x||\leq ||x-y||  \label{z3}
\end{equation}%
implies that
\begin{equation}
||T_{\lambda }x-T_{\lambda }y||\leq r||x-y||,  \label{z4}
\end{equation}%
for all $x,y\in X.$ Since $f(r)\leq 1,$ $f(r)||x-T_{\lambda }x||\leq
||x-T_{\lambda }x||$ holds for every $x\in X.$ By (\ref{z3}), we have
\begin{equation}
||T_{\lambda }x-T_{\lambda }^{2}x||\leq r||x-T_{\lambda }x||,  \label{z5}
\end{equation}%
for $x\in X.$ Now we fix $u\in X$ and define a sequence $\{u_{n}\}$ in $X$
by $u_{n}=T_{\lambda }^{n}u$. Then (\ref{z5}) gives that
\begin{equation*}
||u_{n}-u_{n+1}||\leq r^{n}||x-T_{\lambda }x||.
\end{equation*}%
Thus, $\sum_{n=1}^{\infty }||u_{n}-u_{n+1}||<\infty ,$ and hence $\{u_{n}\}$
is Cauchy sequence in a Banach space $X$. Assume that there exists a point $%
u\in X$ such that $\{u_{n}\}$ converges to $u$ as $n\rightarrow \infty .$
For $x\in X\setminus \{z\},$ there exists $n_{0}\in \mathbb{N}$ such that $%
||u_{n}-z||\leq \frac{||x-z||}{3}$ for all $n\in \mathbb{N}$ with $n\geq
n_{0}.$ Then we have
\begin{align*}
f(r)||u_{n}-T_{\lambda }u_{n}||& \leq ||u_{n}-T_{\lambda
}u_{n}||=||u_{n}-u_{n+1}|| \\
& \leq ||u_{n}-z||+||u_{n+1}-z|| \\
& \leq \frac{2}{3}||x-z||=||x-z||-\frac{||x-z||}{3} \\
& \leq ||x-z||-||u_{n}-z||\leq ||u_{n}-x||.
\end{align*}%
By using (\ref{z3}), we obtain that
\begin{equation*}
||u_{n+1}-T_{\lambda }x||\leq r||u_{n}-x||,\ \ \ \forall \ n\geq n_{0}.
\end{equation*}%
On taking limit as $n\rightarrow \infty ,$ we have
\begin{equation}
||T_{\lambda }x-z||\leq r||x-z||,\ \ \ \forall \ x\in X\setminus \{z\}.
\label{z6}
\end{equation}%
We now show that $T_{\lambda }^{k}z=z,$ for some $k_{0}\in \mathbb{N}$ . On
the contrary suppose that $T_{\lambda }^{k}z\neq z,$ for all $k\in \mathbb{N}%
.$ From (\ref{z6}), we have
\begin{equation}
||T_{\lambda }^{k+1}z-z||\leq r^{k}||T_{\lambda }z-z||,\ \ \ \forall \ k\in
\mathbb{N}.  \label{z7}
\end{equation}%
We consider the following three cases.

\begin{description}
\item[i] $0\leq r\leq\frac{\sqrt{5}-1}{2}$,

\item[ii] $\frac{\sqrt{5}-1}{2}< r<\frac{1}{\sqrt{2}}$

\item[iii] $\frac{1}{\sqrt{2}}\leq r<1.$
\end{description}

{Case i.} When $0\leq r\leq \frac{\sqrt{5}-1}{2},$ we have $r^{2}+r-1\leq 0$.%
$.$ If we assume that $||T_{\lambda }^{2}z-z||<||T_{\lambda
}^{2}z-T_{\lambda }^{3}z||,$ then we obtain
\begin{align*}
||T_{\lambda }z-z||& \leq ||z-T_{\lambda }^{2}z||+||T_{\lambda }z-T_{\lambda
}^{2}z|| \\
& <||T_{\lambda }^{2}z-T_{\lambda }^{3}z||+||T_{\lambda }z-T_{\lambda
}^{2}z|| \\
& \leq r^{2}||z-T_{\lambda }z||+r||z-T_{\lambda }z|| \\
& \leq ||z-T_{\lambda }z||,
\end{align*}%
a contradiction. So we have
\begin{equation*}
f(r)||T_{\lambda }^{2}z-T_{\lambda }^{3}z||\leq ||T_{\lambda }^{2}z-z||.
\end{equation*}%
Now by (\ref{z3}) and (\ref{z7}), we have
\begin{align*}
||z-T_{\lambda }z||& \leq ||z-T_{\lambda }^{3}z||+||T_{\lambda
}^{3}z-T_{\lambda }z|| \\
& \leq r^{2}||z-T_{\lambda }z||+r||T_{\lambda }^{2}z-z|| \\
& \leq r^{2}||z-T_{\lambda }z||+r^{2}||T_{\lambda }z-z|| \\
& =2r^{2}||z-T_{\lambda }z|| \\
& <||z-T_{\lambda }z||,
\end{align*}%
a contradiction.\newline
{Case ii.} Suppose that $2r^{2}<1.$ If
\begin{equation*}
||T_{\lambda }^{2}z-z||<f(r)||T_{\lambda }^{2}z-T_{\lambda }^{3}z||,
\end{equation*}%
then we have
\begin{align*}
||z-T_{\lambda }z||& \leq ||z-T_{\lambda }^{2}z||+||T_{\lambda }z-T_{\lambda
}^{2}z|| \\
& <f(r)||T_{\lambda }^{2}z-T_{\lambda }^{3}z||+||T_{\lambda }z-T_{\lambda
}^{2}z|| \\
& \leq f(r)r^{2}||z-T_{\lambda }z||+r||z-T_{\lambda }z||=||z-T_{\lambda }z||,
\end{align*}%
a contradiction. Hence
\begin{equation*}
f(r)||T_{\lambda }^{2}z-T_{\lambda }^{3}z||\leq ||T_{\lambda }^{2}z-z||.
\end{equation*}%
As in the case $i,$ we can prove that%
\begin{equation*}
||z-T_{\lambda }z||\leq 2r^{2}||z-T_{\lambda }z||<||z-T_{\lambda }z||
\end{equation*}

which gives a contradiction.\newline
{Case iii.} Suppose that $\frac{1}{\sqrt{2}}\leq r<1.$ Note that for $x,y\in
X,$ either%
\begin{equation*}
f(r)||x-T_{\lambda }x||\leq ||x-y||
\end{equation*}

or \
\begin{equation*}
f(r)||T_{\lambda }x-T_{\lambda }^{2}x||\leq ||T_{\lambda }x-y||
\end{equation*}%
hold. Indeed if%
\begin{eqnarray*}
f(r)||x-T_{\lambda }x|| &>&||x-y||\text{ and } \\
f(r)||T_{\lambda }x-T_{\lambda }^{2}x|| &>&||T_{\lambda }x-y||,
\end{eqnarray*}%
then we have
\begin{align*}
||x-T_{\lambda }x||& \leq ||x-y||+||T_{\lambda }x-y|| \\
& <f(r)(||x-T_{\lambda }x||+||T_{\lambda }x-T_{\lambda }^{2}x||) \\
& \leq f(r)(||x-T_{\lambda }x||+r||x-T_{\lambda }x||) \\
& =||x-T_{\lambda }x||,
\end{align*}%
a contradiction. Since either%
\begin{eqnarray*}
f(r)||u_{2n}-u_{2n+1}|| &\leq &||u_{2n}-z||\text{ or} \\
\ f(r)||u_{2n+2}-u_{2n+1}|| &\leq &||u_{2n+1}-z||
\end{eqnarray*}%
hold for every $n\in \mathbb{N}.$ Thus, either%
\begin{eqnarray*}
||u_{2n+1}-T_{\lambda }z|| &\leq &r||u_{2n}-z||\text{ or} \\
\ ||u_{2n+2}-T_{\lambda }z|| &\leq &r||u_{2n+1}-z||
\end{eqnarray*}%
hold for every $n\in \mathbb{N}$. As $\{u_{n}\}$ converges to $z$, the above
inequalities imply that there exists a subsequence of $\{u_{n}\}$ which
converges to $T_{\lambda }z$ which further implies that $T_{\lambda }z=z$, a
contradiction. Therefore in all the three cases, there exists $k\in \mathbb{N%
}$ such that $T_{\lambda }^{k}z=z$. Since $\{T_{\lambda }^{k}z\}$ is a
Cauchy sequence, we obtain $T_{\lambda }z=z.$ That is, $z$ is a fixed point
of $T_{\lambda },$ and hence $Tz=z.$ The uniqueness of a fixed point follows
from (\ref{z6}). \newline
\newline
\textsc{Case 2.} $b=0.$ In this case, we have $\lambda =1$ and $\theta =r.$
Then, for all $x,y\in X$%
\begin{equation*}
f(r)||x-Tx||\leq ||x-y||
\end{equation*}%
implies that%
\begin{equation*}
\ ||Tx-Ty||\leq r||x-y||.
\end{equation*}%
It follows from Theorem $2$ of \cite{SUZUKI 2} that $T$ has unique fixed
point.
\end{proof}

\begin{exmp}
Let $X=\mathbb{R}^{2}$ be equipped with the norm $||.||$ given by $%
||(x_{1},x_{2})||=\sqrt{x_{1}^{2}+x_{2}^{2}}.$ Define a mapping $T$ on $X$
by
\begin{equation*}
T(x_{1},x_{2})=%
\begin{cases}
(0,0) & \text{if $(x_{1},x_{2})=\mathbb{R}^{2}\setminus \{(4,5),(5,4)\}$} \\
(4,0) & \text{if $(x_{1},x_{2})=(4,5)$} \\
(0,4) & \text{if $(x_{1},x_{2})=(5,4)$}%
\end{cases}%
\end{equation*}%
Note that, for $b=1,$ $\theta =1$ and $r=\frac{1}{2},$ $T$ satisfies the
assumption in Theorem \ref{bot}. Indeed,%
\begin{equation*}
||b(x-y)+Tx-Ty||\leq ||x-y||\text{ if }x,y\in \mathbb{R}^{2}\setminus
\{(4,5),(5,4)\}.
\end{equation*}
Since
\begin{equation*}
\psi (\frac{1}{2})||(4,5)-T(4,5)||=\frac{1}{2}||(4,5)-(4,0)||=\frac{5}{2}%
>2=||(4,5)-(5,4)||.
\end{equation*}
\end{exmp}

As a Corollary of our result; if we put $b=0$ in Theorem \ref{bot}, we
obtain Theorem $2$ of \cite{SUZUKI 2} in the setting of Banach spaces.

\begin{cor}
\label{001} Let $(X,||.||)$ be Banach space and $T$ a mapping on $X.$ Define
a nonincreasing function $f$ from $[0,1)$ onto $(\frac{1}{2},1]$ by
\begin{equation*}
f(r)=%
\begin{cases}
1 & \text{if $0\leq r\leq \frac{\sqrt{5}-1}{2}$}, \\
\frac{1-r}{r^{2}} & \text{if $\frac{\sqrt{5}-1}{2}<r<\frac{1}{\sqrt{2}}$},
\\
\frac{1}{1+r} & \text{if $\frac{1}{\sqrt{2}}\leq r<1.$}%
\end{cases}%
\end{equation*}%
Assume that if there exists $r\in \lbrack 0,1)$ such that for any $x,y\in X$
\begin{equation}
f(r)||x-Tx||\leq ||x-y||  \label{z1}
\end{equation}%
implies
\begin{equation}
||Tx-Ty||\leq r||x-y||.  \label{z2}
\end{equation}%
Then there exists a unique fixed point of $T.$
\end{cor}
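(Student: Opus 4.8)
The plan is to recognize that Corollary \ref{001} is precisely the special case $b=0$ of Theorem \ref{bot}, so the whole argument reduces to checking that the hypotheses and conclusion of the corollary are exactly what the theorem delivers under this choice of parameter. No new analysis is required; the work is entirely a matter of substituting and matching notation.

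First I would instantiate Theorem \ref{bot} with $b=0$. Then $\lambda=\frac{1}{b+1}=1$, and the admissible range $\theta\in[0,b+1)$ becomes $\theta\in[0,1)$. Since the theorem stipulates $r=\theta\lambda$, setting $\lambda=1$ forces $\theta=r$, so that, given the $r\in[0,1)$ supplied in the corollary, the choices $b=0$ and $\theta=r$ satisfy every constraint of the theorem. This confirms that the corollary's hypothesis ``there exists $r\in[0,1)$'' legitimately produces an admissible triple $(b,\theta,r)$ for Theorem \ref{bot}.

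Next I would verify that the contractive scheme collapses to the Suzuki form. With $\lambda=1$, the piecewise function $\psi$ of Theorem \ref{bot} reduces branch by branch to $f$, since each branch of $\psi$ carries an explicit factor of $\lambda$; hence $\psi(r)=f(r)$ on all three intervals. Consequently the premise (\ref{pp1}), namely $\psi(r)||x-Tx||\leq||x-y||$, becomes $f(r)||x-Tx||\leq||x-y||$, which is exactly (\ref{z1}). Likewise, with $b=0$ and $\theta=r$, the conclusion (\ref{pp2}) reads $||Tx-Ty||\leq r||x-y||$, which is exactly (\ref{z2}).

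Since every hypothesis of Theorem \ref{bot} is met and its conclusion coincides with the assertion of the corollary, the existence and uniqueness of the fixed point follow at once by invoking that theorem. The only point demanding any care --- and it is book-keeping rather than a genuine obstacle --- is confirming that the degenerate value $\lambda=1$ is covered; this is exactly Case~2 in the proof of Theorem \ref{bot}, where the argument defers to Theorem~2 of \cite{SUZUKI 2}. Thus the corollary is immediate, and no independent fixed-point construction is needed.
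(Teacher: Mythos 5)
Your proposal is correct and follows exactly the paper's route: the paper also obtains Corollary \ref{001} by setting $b=0$ in Theorem \ref{bot}, whereupon $\lambda=1$, $\theta=r$, $\psi=f$, and the conclusion reduces (via Case~2 of that theorem's proof) to Suzuki's result. Nothing further is needed.
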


We prove the following theorem, which is the generalization of the Theorem %
\ref{101} in the setting of compact normed spaces.

\begin{thm}
\label{main} Let $(X,||.||)$ be compact normed space and $T:X\rightarrow X$.
If there exists $b\in \lbrack 0,\infty )$ with $\lambda =\frac{1}{b+1}$ such
that for any $x,y\in X$
\begin{equation}
\frac{\lambda }{2}||x-Tx||<||x-y||  \label{rizwan}
\end{equation}%
implies that
\begin{equation}
||b(x-y)+Tx-Ty||<||x-y||.  \label{anjum}
\end{equation}%
Then $T$ has a fixed point.
\end{thm}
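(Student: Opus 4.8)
The plan is to reduce Theorem \ref{main} to Suzuki's compact-space result, Theorem \ref{101}, by passing to the averaged (Krasnoselskii) map, exactly as was done for the averaged operator in the proof of Theorem \ref{bot}. Set $\lambda=\frac{1}{b+1}$ and define
\begin{equation*}
T_{\lambda}x=(1-\lambda)x+\lambda Tx,\qquad x\in X.
\end{equation*}
Since $X$ is a (convex) subset of a normed space, $T_{\lambda}$ is again a selfmap of $X$. The purpose of this substitution is to linearize the two awkward expressions occurring in the hypothesis.

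First I would record the two algebraic identities that drive everything. Using $\lambda b=\frac{b}{b+1}=1-\lambda$, one computes
\begin{equation*}
T_{\lambda}x-T_{\lambda}y=\lambda\bigl(b(x-y)+Tx-Ty\bigr),\qquad x-T_{\lambda}x=\lambda(x-Tx).
\end{equation*}
The second identity shows that the premise \eqref{rizwan}, namely $\frac{\lambda}{2}\|x-Tx\|<\|x-y\|$, is literally the statement $\frac{1}{2}\|x-T_{\lambda}x\|<\|x-y\|$. The first identity shows that the conclusion \eqref{anjum} can be multiplied through by $\lambda$: since $0<\lambda\le 1$, the strict inequality $\|b(x-y)+Tx-Ty\|<\|x-y\|$ yields
\begin{equation*}
\|T_{\lambda}x-T_{\lambda}y\|=\lambda\|b(x-y)+Tx-Ty\|<\lambda\|x-y\|\le\|x-y\|.
\end{equation*}

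Putting these together, the hypothesis of the theorem says exactly that for all $x,y\in X$,
\begin{equation*}
\tfrac{1}{2}\|x-T_{\lambda}x\|<\|x-y\|\quad\text{implies}\quad\|T_{\lambda}x-T_{\lambda}y\|<\|x-y\|.
\end{equation*}
This is precisely the contractive hypothesis of Theorem \ref{101} applied to the map $T_{\lambda}$ on the compact space $(X,\|\cdot\|)$. Hence Theorem \ref{101} applies and produces a (unique) point $z\in X$ with $T_{\lambda}z=z$. Finally, $(1-\lambda)z+\lambda Tz=z$ together with $\lambda>0$ forces $Tz=z$, so $z$ is the desired fixed point of $T$.

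The individual steps are routine once the substitution is chosen, so the only genuine obstacle is the well-definedness of $T_{\lambda}$: one must be sure that $T_{\lambda}$ maps $X$ into $X$, which needs the convexity of $X$ (the degenerate case $b=0$ gives $\lambda=1$ and $T_{\lambda}=T$, reducing the claim directly to Theorem \ref{101}). Beyond that, the only points requiring a word of care are that the premise is vacuously satisfied when $x=y$, and that multiplying a strict inequality by $\lambda\le 1$ preserves strictness when compared against $\|x-y\|$; both are immediate from $0<\lambda\le 1$.
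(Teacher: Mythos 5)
Your proof is correct, and its first half is exactly the paper's reduction: the paper also passes to $T_{\lambda}x=(1-\lambda)x+\lambda Tx$ and uses $\lambda b=1-\lambda$ to rewrite \eqref{rizwan} as $\frac{1}{2}\Vert x-T_{\lambda}x\Vert<\Vert x-y\Vert$ and \eqref{anjum} as $\Vert T_{\lambda}x-T_{\lambda}y\Vert<\lambda\Vert x-y\Vert<\Vert x-y\Vert$. Where you genuinely diverge is in what happens next. You observe that the reduced statement is verbatim the hypothesis of Theorem \ref{101} for the selfmap $T_{\lambda}$ of the compact space $X$, and you finish by citing that theorem and using $Fix(T_{\lambda})=Fix(T)$. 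The paper instead reproves Suzuki's compactness argument from scratch for $T_{\lambda}$: it sets $\beta=\inf\{\frac{1}{2}\Vert x-T_{\lambda}x\Vert:x\in X\}$, extracts convergent subsequences $x_{n}\rightarrow v$ and $T_{\lambda}x_{n}\rightarrow w$, shows $\beta=0$, and then derives a contradiction from the dichotomy that for each $n$ either $\frac{1}{2}\Vert x_{n}-T_{\lambda}x_{n}\Vert<\Vert x_{n}-v\Vert$ or $\frac{1}{2}\Vert T_{\lambda}x_{n}-T_{\lambda}^{2}x_{n}\Vert<\Vert T_{\lambda}x_{n}-v\Vert$. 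Your route is shorter and makes the logical content transparent --- the theorem is an immediate corollary of Suzuki's compact fixed point theorem once the averaged map is introduced --- and it is consistent with the paper's own handling of the case $b=0$, where the authors do cite Theorem 3 of \cite{Suzuki} rather than argue directly; the paper's longer route buys only self-containedness. One caveat applies equally to both proofs: $T_{\lambda}$ maps $X$ into $X$ because a linear space is convex, but a compact normed \emph{linear} space is necessarily trivial, so the statement only has content if $X$ is read as, say, a compact convex subset of a normed space; your parenthetical remark about convexity is precisely what is needed in that reading, a point the paper passes over in silence.
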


\begin{proof}
We divide the proof into the following two cases.\newline
\textsc{Case 1.} Suppose that $b>0.$ Take $\lambda =\frac{1}{b+1}.$ Clearly,
$0<\lambda <1.$ In this case, the given assumption becomes; if for $x,y\in X
$
\begin{equation}
\frac{1}{2}||x-T_{\lambda }x||<||x-y||  \label{1}
\end{equation}%
implies
\begin{equation}
||T_{\lambda }x-T_{\lambda }y||<\lambda ||x-y||<||x-y||.  \label{2}
\end{equation}%
We put

\begin{equation*}
\beta =\inf \{\frac{1}{2}||x-T_{\lambda }x||:\ x\in X\}
\end{equation*}

and choose a sequence $\{x_{n}\}\in X$ such that $\lim_{n\rightarrow \infty
}||x_{n}-T_{\lambda }x_{n}||=\beta .$ Since $X$ is compact, without any loss
of generality, we may assume that $\{x_{n}\}$ and $\{T_{\lambda }{x_{n}}\}$
converge to some element $v,w\in X,$ respectively. We now show that $\beta
=0.$ On then contrary suppose that $\beta >0.$ We have
\begin{equation*}
\lim_{n\rightarrow \infty }||x_{n}-T_{\lambda
}x_{n}||=||v-w||=\lim_{n\rightarrow \infty }||x_{n}-w||=\beta
\end{equation*}%
We may now choose $n_{0}\in \mathbb{N}$ such that%
\begin{equation*}
\frac{2\beta }{3}<||x_{n}-w||\text{ and }||x_{n}-T_{\lambda }x_{n}||<\frac{%
4\beta }{3}.
\end{equation*}%
Thus, $\frac{1}{2}||x_{n}-T_{\lambda }x_{n}||<||x_{n}-w||$ for $n\geq n_{0}.$
From (\ref{1}) and (\ref{2}) , we have
\begin{equation*}
||T_{\lambda }x_{n}-T_{\lambda }w||<||x_{n}-w||,\ \ \ \ \forall \ n\geq
n_{0}.
\end{equation*}%
This implies that
\begin{align*}
||w-T_{\lambda }w||& =\lim_{n\rightarrow \infty }||T_{\lambda
}x_{n}-T_{\lambda }w|| \\
& <\lim_{n\rightarrow \infty }||x_{n}-w||=\beta ,
\end{align*}%
that is,
\begin{equation*}
||w-T_{\lambda }w||<\beta .
\end{equation*}%
From the definition of $\beta ,$ we obtain $||w-T_{\lambda }w||=\beta .$
Since $\frac{1}{2}||w-T_{\lambda }w||<||w-T_{\lambda }w||,$ we have%
\begin{equation*}
||T_{\lambda }w-T_{\lambda }^{2}w||<||w-T_{\lambda }w||=\beta ,
\end{equation*}

which contradicts the definition of $\beta $ and hence $\beta =0.$ Next, we
show that $T$ has a fixed point. Assume on contrary that $T$ does not have
fixed point. Note that
\begin{equation*}
||T_{\lambda }x_{n}-T_{\lambda }^{2}x_{n}||<||x_{n}-T_{\lambda }x_{n}||,
\end{equation*}%
holds for every $n\in \mathbb{N}$ because
\begin{equation*}
\frac{1}{2}||x_{n}-T_{\lambda }x_{n}||<||x_{n}-T_{\lambda }x_{n}||.
\end{equation*}%
Thus,
\begin{equation*}
\lim_{n\rightarrow \infty }||v-T_{\lambda
}x_{n}||=||v-w||=\lim_{n\rightarrow \infty }||x_{n}-T_{\lambda
}x_{n}||=\beta =0,
\end{equation*}
implies that $\{T_{\lambda }x_{n}\}$ also converges to $v.$ Also,
\begin{align*}
\lim_{n\rightarrow \infty }||v-T_{\lambda }^{2}x_{n}||& \leq
\lim_{n\rightarrow \infty }(||v-T_{\lambda }x_{n}||+||T_{\lambda
}x_{n}-T_{\lambda }^{2}x_{n}||) \\
& <\lim_{n\rightarrow \infty }(||v-T_{\lambda }x_{n}||+||x_{n}-T_{\lambda
}x_{n}||) \\
& =||v-v||+||v-v||=0.
\end{align*}%
Thus, $\{T_{\lambda }^{2}x_{n}\}$ converges to $v.$ If%
\begin{equation*}
||x_{n}-v||\leq \frac{1}{2}||x_{n}-T_{\lambda }x_{n}||\text{ and }%
||T_{\lambda }x_{n}-v||\leq \frac{1}{2}||T_{\lambda }x_{n}-T_{\lambda
}^{2}x_{n}||,
\end{equation*}%
then we have
\begin{align*}
||x_{n}-T_{\lambda }x_{n}||& \leq ||x_{n}-v||+||T_{\lambda }x_{n}-v|| \\
& \leq \frac{1}{2}||x_{n}-T_{\lambda }x_{n}||+\frac{1}{2}||T_{\lambda
}x_{n}-T_{\lambda }^{2}x_{n}|| \\
& <\frac{1}{2}||x_{n}-T_{\lambda }x_{n}||+\frac{1}{2}||x_{n}-T_{\lambda
}x_{n}||,
\end{align*}%
that is,
\begin{equation*}
||x_{n}-T_{\lambda }x_{n}||<||x_{n}-T_{\lambda }x_{n}||
\end{equation*}%
a contradiction. Hence for every $n\in \mathbb{N},$ either%
\begin{equation*}
\frac{1}{2}||x_{n}-T_{\lambda }x_{n}||<||x_{n}-v||\text{ or }\frac{1}{2}%
||T_{\lambda }x_{n}-T_{\lambda }^{2}x_{n}||<||T_{\lambda }x_{n}-v||
\end{equation*}%
holds. Using (\ref{rizwan}), we conclude that either%
\begin{equation*}
||T_{\lambda }x_{n}-T_{\lambda }v||<||x_{n}-v||\text{ or }||T_{\lambda
}^{2}x_{n}-T_{\lambda }v||<||T_{\lambda }x_{n}-v||
\end{equation*}%
holds. Hence one of the following conditions holds:

\begin{enumerate}
\item There exists an infinite subset $I$ of $\mathbb{N}$ such that
\begin{equation}
||T_{\lambda }x_{n}-T_{\lambda }v||<||x_{n}-v||,\ \ \forall \ n\in I.\label{qw}
\end{equation}%
\item There exists an infinite subset $J$ of $\mathbb{N}$ such that
\begin{equation}
||T_{\lambda }^{2}x_{n}-T_{\lambda }v||<||T_{\lambda }x_{n}-v||,\ \ \forall
\ n\in J.  \label{qwe}
\end{equation}%
In the first case; by using (\ref{qw}) we obtain
\begin{align*}
||v-T_{\lambda }v||& =\lim_{n\in I,\ n\rightarrow \infty }||T_{\lambda
}x_{n}-T_{\lambda }v|| \\
& \le \lim_{n\in I,\ n\rightarrow \infty }||x_{n}-v|| \\
& =||v-v||=0,
\end{align*}%
which implies that $T_{\lambda }v=v$. Also in the second case, using (\ref%
{qwe}) we obtain
\begin{align*}
||v-T_{\lambda }v||& =\lim_{n\in J,\ n\rightarrow \infty }||T_{\lambda
}^{2}x_{n}-T_{\lambda }v|| \\
& \le \lim_{n\in J,\ n\rightarrow \infty }||T_{\lambda }x_{n}-v||=0.
\end{align*}%
Hence, we have shown that $v$ is a fixed point of $T_{\lambda }$ in both
cases, a contradiction. Therefore $Fix(T)\neq \emptyset .$\newline
\textsc{Case 2.} $b=0.$ In this case, the Suzuki Berinde type contraction
becomes;
\begin{equation*}
\frac{1}{2}||x-Tx||<||x-y||
\end{equation*}%
implies that
\begin{equation*}
||Tx-Ty||<\left\Vert x-y\right\Vert
\end{equation*}%
Since $\theta \in \lbrack 0,1).$ It follows from Theorem $3$ of \cite{Suzuki}%
, $Fix(T)\neq \emptyset .$
\end{enumerate}
\end{proof}

As a Corollary of our result; if we put $b=0$ in Theorem (\ref{anjum}), then
we obtain Theorem $3$ of \cite{Suzuki} in the setting of a compact normed
space.

\begin{cor}
Let $(X,||.||)$ be compact linear normed space and $T$ a mapping on $X.$
Assume that%
\begin{equation*}
\frac{1}{2}||x-Tx||<||x-y||\text{ implies that }||Tx-Ty||<||x-y||
\end{equation*}

for all $x,y\in X.$ Then $T$ has fixed point.
\end{cor}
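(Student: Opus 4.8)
The plan is to obtain this Corollary as the special case $b=0$ of Theorem \ref{main}, which has just been established; no independent argument is required. First I would record the effect of the substitution $b=0$ on the parameters. Since $\lambda=\frac{1}{b+1}$, setting $b=0$ gives $\lambda=1$, and hence $\frac{\lambda}{2}=\frac{1}{2}$. Consequently the triggering inequality (\ref{rizwan}), namely $\frac{\lambda}{2}||x-Tx||<||x-y||$, collapses to $\frac{1}{2}||x-Tx||<||x-y||$, while the conclusion (\ref{anjum}), namely $||b(x-y)+Tx-Ty||<||x-y||$, collapses to $||Tx-Ty||<||x-y||$ because the term $b(x-y)$ vanishes.

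These are exactly the hypothesis and the conclusion assumed in the Corollary. Thus the mapping $T$ of the Corollary satisfies, verbatim, the contractive implication required in Theorem \ref{main} with the choice $b=0$ (equivalently $\lambda=1$) on the compact normed space $(X,||.||)$. Applying Theorem \ref{main} then yields $Fix(T)\neq\emptyset$, which is precisely the assertion of the Corollary.

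As an alternative, and as a consistency check, I would note that a compact normed space is in particular a compact metric space under the induced metric $d(x,y)=||x-y||$, and that the assumed implication is exactly Suzuki's condition in Theorem \ref{101}. Hence the Corollary also follows directly from Theorem \ref{101}, which in fact gives the stronger conclusion that the fixed point is unique. I do not anticipate any genuine obstacle here; the only point requiring a moment's care is to confirm that the passage $b\to 0$ neither degenerates the hypotheses nor discards compactness. That this is harmless is already visible in the proof of Theorem \ref{main}, whose Case 2 handles $b=0$ separately by reducing to Theorem \ref{101}. The present Corollary is simply that case read off in isolation.
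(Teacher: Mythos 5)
Your proposal is correct and is exactly the paper's route: the paper presents this Corollary as the specialization $b=0$ (hence $\lambda=1$) of Theorem \ref{main}, which is precisely your main argument. Your side remark that the statement also follows directly from Theorem \ref{101} matches what the paper itself does in Case 2 of the proof of Theorem \ref{main}, so there is no divergence in substance.
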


Now we shall discuss the completeness of underlying space.\newline

\begin{thm}
\label{awara} Let $(X,\left\Vert .\right\Vert )$ be a normed space. Define $%
\psi $ as in Theorem \ref{bot}. For $b\in \lbrack 0,\infty )$ and $\theta
\in \lbrack 0,b+1)$ with $\lambda =\frac{1}{b+1},$ let $\Gamma _{b,s}$ be
the family of mappings $T$ on $X$ satisfying the following:

\begin{itemize}
\item[(a)] \ For $x,y\in X,$
\begin{equation}  \label{bot1}
s\left\Vert x-Tx\right\Vert\leq \left\Vert x-y\right\Vert
\end{equation}
implies
\begin{equation}  \label{bot2}
\left\Vert b(x-y)+Tx-Ty\right\Vert\leq \theta \left\Vert x-y\right\Vert,
\end{equation}
where $r=\theta\lambda$ and $s\in \psi(r).$
\end{itemize}

Suppose that $\Theta _{b,s}$ is the family of mappings $T$ on $X$ satisfying
$(a)$ and the following:

\begin{itemize}
\item[(b)] \ $\big((1-\lambda)I+\lambda T\big)(X)$ is countably infinite.

\item[(c)] Every subset of $\big((1-\lambda )I+\lambda T\big)(X)$ is closed,
where $I$ is the identity map on $X.$
\end{itemize}

Then the following are equivalent:

\begin{itemize}
\item[(i)] $X$ is Banach space.

\item[(ii)] Every mapping $T\in \Gamma_{b,\psi(r)}$ has a fixed point for
all $r\in [0,1).$

\item[(iii)] There exists $r\in (0,1)$ and $s\in (0,\psi (r)]$ such that
every mapping $T\in \Gamma _{b,s}$ has a fixed point.
\end{itemize}
\end{thm}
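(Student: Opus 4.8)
The plan is to prove the cycle (i)$\Rightarrow$(ii)$\Rightarrow$(iii)$\Rightarrow$(i). The first implication is immediate from Theorem~\ref{bot}: a mapping $T\in\Gamma_{b,\psi(r)}$ is precisely one satisfying the hypothesis (\ref{pp1})--(\ref{pp2}) of Theorem~\ref{bot} with $s=\psi(r)$, so in a Banach space it has a unique fixed point, and letting $r$ range over $[0,1)$ yields (ii). The implication (ii)$\Rightarrow$(iii) is trivial: fix any $r\in(0,1)$ and take $s=\psi(r)\in(0,\psi(r)]$; then $\Gamma_{b,s}=\Gamma_{b,\psi(r)}$, so every member has a fixed point by (ii).

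The real content is (iii)$\Rightarrow$(i), which I would establish by contraposition, constructing from an incomplete $X$ a fixed-point-free map in $\Gamma_{b,s}$ for the $r,s$ supplied by (iii). The first move is to pass to the averaged operator $T_\lambda=(1-\lambda)I+\lambda T$ used throughout the paper. A direct computation gives
\begin{equation*}
\|x-T_\lambda x\|=\lambda\|x-Tx\|,\qquad \|b(x-y)+Tx-Ty\|=(b+1)\|T_\lambda x-T_\lambda y\|,
\end{equation*}
so, writing $c:=s(b+1)\in(0,f(r)]$, condition (a) is \emph{equivalent} to the Suzuki-type implication
\begin{equation*}
c\,\|x-T_\lambda x\|\le\|x-y\|\ \Longrightarrow\ \|T_\lambda x-T_\lambda y\|\le r\,\|x-y\|,
\end{equation*}
while $T$ and $T_\lambda$ have exactly the same fixed points. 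Thus it suffices to build a self-map $S$ of $X$ with countably infinite range, satisfying this implication yet having no fixed point; the mapping $T=(b+1)S-bI$ then lies in $\Gamma_{b,s}$ and is fixed-point-free.

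For the construction I would fix a Cauchy sequence $\{x_n\}$ in $X$ with no limit, thin it to a subsequence $\{z_n\}$ of distinct points with geometrically decreasing gaps, and set $R=\{z_n\}$. Since a Cauchy sequence converges as soon as one subsequence does, $R$ has \emph{no} accumulation point in $X$; hence $R$ is countably infinite and every subset of $R$ is closed, so any $S$ with $S(X)\subseteq R$ automatically satisfies (b) and (c) and places $T$ in $\Theta_{b,s}\subseteq\Gamma_{b,s}$. Using $\varphi(x):=\lim_n\|x-x_n\|$ (which exists, is everywhere positive because $\{x_n\}$ does not converge, and has infimum $0$), I would define $S$ to send each $x$ to a point $z_{n(x)}\in R$ whose index advances far enough that $\varphi$ strictly decreases. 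Because $S$ maps every point of $R$ to a \emph{later}, hence distinct, point of $R$, and any fixed point must lie in $S(X)\subseteq R$, the map $S$ has no fixed point.

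The hard part, exactly as in Suzuki's metric characterization of completeness, is verifying the Suzuki-type implication for this $S$: one must check for all $x,y$ that $c\|x-Sx\|\le\|x-y\|$ forces $\|Sx-Sy\|\le r\|x-y\|$. This demands that the gaps $\|z_n-z_m\|$ and the level assignment $x\mapsto z_{n(x)}$ be calibrated against the three regimes $0\le r\le\tfrac{\sqrt5-1}{2}$, $\tfrac{\sqrt5-1}{2}<r<\tfrac{1}{\sqrt2}$, $\tfrac{1}{\sqrt2}\le r<1$ of $f$ (equivalently $\psi$), mirroring the case analysis in the proof of Theorem~\ref{bot}, and to do so uniformly for the prescribed $c\le f(r)$. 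Securing these estimates while keeping $S(X)$ inside the non-convergent Cauchy sequence is the delicate point; once it is in hand, $S$ witnesses the failure of (iii), closing the contrapositive and the cycle.
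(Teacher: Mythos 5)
Your reduction to the averaged map $T_{\lambda}=(1-\lambda)I+\lambda T$ is correct and is exactly the paper's device: condition (a) is equivalent to the Suzuki-type implication $c\|x-T_{\lambda}x\|\le\|x-y\|\Rightarrow\|T_{\lambda}x-T_{\lambda}y\|\le r\|x-y\|$ with $c=s(b+1)\in(0,f(r)]$, and $Fix(T)=Fix(T_{\lambda})$; your (i)$\Rightarrow$(ii) and (ii)$\Rightarrow$(iii) are also fine. But your proof of (iii)$\Rightarrow$(i) stops precisely at the step that carries all the content, and you say so yourself (``once it is in hand''). Worse, the construction as you state it cannot support the verification, because it is underspecified: sending $x$ to a point $z_{n(x)}$ at which $\varphi$ merely ``strictly decreases'' is not enough. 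What is needed (and what the paper does) is a definite multiplicative decay: choose $T_{\lambda}x$ to be a term $x_{n_0}$ of the non-convergent Cauchy sequence with
$\varphi(T_{\lambda}x)\le\frac{cr}{3+cr}\,\varphi(x)$,
where $\varphi(x)=\lim_{n}\|x-x_{n}\|$ (this is the quantity the paper calls $g$, with $cr=s\theta\lambda$). Without pinning down this factor, the implication (a) simply cannot be checked, so the contrapositive is not established.

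Your diagnosis of what the missing verification requires also points in the wrong direction: no calibration against the three regimes of $f$ is needed, and no thinning of the sequence to geometric gaps. With the decay factor above, the check is uniform and short, resting only on $|\varphi(x)-\varphi(y)|\le\|x-y\|\le\varphi(x)+\varphi(y)$ and $c\le f(r)\le 1$. If $\varphi(y)>2\varphi(x)$, then
$\|T_{\lambda}x-T_{\lambda}y\|\le\varphi(T_{\lambda}x)+\varphi(T_{\lambda}y)\le\frac{cr}{3+cr}\bigl(\varphi(x)+\varphi(y)\bigr)\le\frac{r}{3}\bigl(\varphi(x)+\varphi(y)\bigr)\le r\bigl(\varphi(y)-\varphi(x)\bigr)\le r\|x-y\|$,
so the conclusion of (a) holds unconditionally. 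If $\varphi(y)\le2\varphi(x)$, the hypothesis $c\|x-T_{\lambda}x\|\le\|x-y\|$ gives
$\|x-y\|\ge c\bigl(\varphi(x)-\varphi(T_{\lambda}x)\bigr)\ge\frac{3c}{3+cr}\,\varphi(x)$,
whence
$\|T_{\lambda}x-T_{\lambda}y\|\le\frac{cr}{3+cr}\bigl(\varphi(x)+\varphi(y)\bigr)\le\frac{3cr}{3+cr}\,\varphi(x)\le r\|x-y\|$.
The piecewise definition of $f$ (resp.\ $\psi$) plays no role in this direction; it is only needed for the positive direction, i.e.\ Theorem \ref{bot}. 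So the gap in your proposal is not a matter of ``delicate calibration'' left to the reader: it is the absence of the specific contraction factor $\frac{cr}{3+cr}$ on $\varphi$ and of the two-case estimate above, and until those are supplied your argument does not prove (iii)$\Rightarrow$(i).
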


\begin{proof}
We divide the proof into the following two cases.\newline
\textsc{Case 1.} Suppose that $b>0.$ Take $\lambda =\frac{1}{b+1}.$ Clearly,
$0<\lambda <1.$ It follows form Theorem \ref{bot} that $(i)$ implies $(ii).$
Clearly, $\Theta _{b,s}\subset T\in \Gamma _{b,\psi (r)}$ for $r\in \lbrack
0,1)$ which gives that $(ii)$ implies $(iii).$ We now show that $(iii)$
implies $(i).$ On the contrary suppose that $X$ is not a Banach space. That
is, there exits a Cauchy sequence $\{x_{n}\}\in X$ which does not converge.
Define a function
\begin{equation*}
g:X\rightarrow \lbrack 0,\infty )
\end{equation*}%
by
\begin{equation*}
g(x):=\lim_{n\rightarrow \infty }\left\Vert x-x_{n}\right\Vert ,\ \ \ \
\forall \ x\in X.
\end{equation*}%
Clearly, $g$ is well defined because $\{\left\Vert x-x_{n}\right\Vert \}$ is
a Cauchy sequence for every $x\in X.$ The following are obvious:

\begin{itemize}
\item $g(x)-g(y)\leq \left\Vert x-y\right\Vert\leq g(x)+g(y)$ for $x,y\in X,$

\item $g(x)>0$ for $x\in X,$

\item $\lim_{n\to\infty}g(x_{n})=0.$
\end{itemize}

Define a mapping $T_{\lambda }$ on $X$ as follows: For each $x\in X$, since $%
f(x)>0$ and $\lim_{n\rightarrow \infty }g(x_{n})=0,$ there exist $n_{0}\in
\mathbb{N}$ satisfying
\begin{equation*}
f(x_{0})\leq \frac{s\theta \lambda }{3+s\theta \lambda }.
\end{equation*}%
Define $T:X\rightarrow X$ by
\begin{equation*}
T(x)=\frac{u_{n_{0}}-(1-\lambda )x}{\lambda }
\end{equation*}%
If we put
\begin{equation*}
T_{\lambda }x=(1-\lambda )x+\lambda Tx=u_{n_{0}}.
\end{equation*}%
Then we have%
\begin{equation*}
f(T_{\lambda }x)\leq \frac{s\theta \lambda }{3+s\theta \lambda }f(x)\text{
and }T_{\lambda }x\in \{x_{n}:\ n\in \mathbb{N}\}
\end{equation*}%
for all $x\in X.$ Also, $T_{\lambda }x\neq x$ for all $x\in X$ because $%
f(T_{\lambda }(x))<f(x).$ That is, $T_{\lambda }$ does not have fixed point.
Since $T_{\lambda }(X)\subset \{x_{n}:\ n\in \mathbb{N}\},$ $(b)$ holds.
Also it is straightforward to prove $(c)$. \newline
We now prove $(a).$ Fix $x,y\in X$ with
\begin{equation}
s\left\Vert x-Tx\right\Vert \leq \left\Vert x-y\right\Vert .  \label{jhg}
\end{equation}%
Now $s\in \psi (r)$ implies that $s=\lambda t,$ for some $t\in f(r)$ where $f
$ is defined in Theorem \ref{bot}. Thus (\ref{jhg}) becomes,
\begin{equation}
t\left\Vert x-T_{\lambda }x\right\Vert \leq \left\Vert x-y\right\Vert .
\label{fds}
\end{equation}%
In the case where $g(y)>2g(x),$ we have
\begin{align*}
\left\Vert T_{\lambda }x-T_{\lambda }y\right\Vert & \leq g(T_{\lambda
}x)+g(T_{\lambda }y) \\
& \leq \frac{s\theta \lambda }{3+s\theta \lambda }(g(x)+g(y)) \\
& \leq \frac{\theta \lambda }{3}(g(x)+g(y)) \\
& \leq \frac{\theta \lambda }{3}(g(x)+g(y))+\frac{2\theta \lambda }{3}%
(g(y)-2g(x)) \\
& =\theta \lambda (g(y)-g(x))\leq \theta \lambda \left\Vert x-y\right\Vert .
\end{align*}%
This gives that
\begin{equation}
\left\Vert T_{\lambda }x-T_{\lambda }y\right\Vert \leq \theta \lambda
\left\Vert x-y\right\Vert .  \label{qwer}
\end{equation}%
Since $\lambda =\frac{1}{b+1}.$ So, (\ref{qwer}) becomes
\begin{equation*}
\left\Vert b(x-y)+Tx-Ty\right\Vert \leq \theta \left\Vert x-y\right\Vert .
\end{equation*}%
In the other case, where $g(y)\leq 2g(x),$ we have
\begin{align*}
\left\Vert x-y\right\Vert & t\left\Vert x-T_{\lambda }x\right\Vert \geq
s(g(x)-f(T_{\lambda }x)) \\
& \geq s\biggl(1-\frac{s\theta \lambda }{3+s\theta \lambda }\biggr )g(x)=\frac{3s}{%
3+s\theta \lambda }g(x)
\end{align*}%
and hence
\begin{align*}
\left\Vert T_{\lambda }x-T_{\lambda }y\right\Vert & \leq f(T{\lambda }x)+f(T{%
\lambda }y) \\
& \leq \frac{s\theta \lambda }{3+s\theta \lambda }\big(g(x)-g(y)\big) \\
& \leq \frac{3s\theta \lambda }{3+s\theta \lambda }f(x)\leq \theta \lambda
\left\Vert x-y\right\Vert .
\end{align*}%
Since $\lambda =\frac{1}{b+1},$ we have
\begin{equation*}
\left\Vert b(x-y)+Tx-Ty\right\Vert \leq \theta \left\Vert x-y\right\Vert .
\end{equation*}%
Hence we have proved $(a),$ that is, $T\in \Theta _{b,s}.$ Since $%
Fix(T)=Fix(T_{\lambda }).$ By $(iii),$ $T$ has a fixed point, a
contradiction. Thus $X$ is Banach space.\newline
\textsc{Case 2.} $b=0.$ In this case, by Theorem (Theorem 4 \cite{SUZUKI 2})
$(i)$ implies $(ii),$ $(ii)$ implies $(iii)$ and $(iii)$ implies $(i)$. This
complete the proof.
\end{proof}

As a Corollary of our result, if we put $b=0$ in Theorem \ref{awara}, then
we can obtain Theorem $4$ of \cite{SUZUKI 2}, in the setting of normed space.

\begin{cor}
Let $(X,\left\Vert .\right\Vert )$ be a linear normed space. Define mapping $%
\psi $ as in Theorem \ref{bot}. For $b=0$, $r=\theta \in \lbrack 0,1),$ and $%
\lambda =1,$ let $\Gamma _{0,s}$ be the family of mappings $T$ on $X$
satisfying the following:

\begin{itemize}
\item[(a)] \ For $x,y\in X,$
\begin{equation}
s\left\Vert x-Tx\right\Vert \leq \left\Vert x-y\right\Vert  \label{bot1x}
\end{equation}%
implies that
\begin{equation}
\left\Vert Tx-Ty\right\Vert \leq r\left\Vert x-y\right\Vert ,  \label{bot2x}
\end{equation}%
where $s\in \psi (r).$
\end{itemize}

Let $\Theta_{0,s}$ be the family of mappings $T$ on $X$ satisfying $(a)$ and
the following:

\begin{itemize}
\item[(b)] \ $T(X)$ is countably infinite.

\item[(c)] Every subset of $T(X)$ is closed,
\end{itemize}

Then the following are equivalent:

\begin{itemize}
\item[(i)] $X$ is Banach space.

\item[(ii)] Every mapping $T\in \Gamma_{0,\psi(r)}$ has a fixed point for
all $r\in [0,1).$

\item[(iii)] There exists $r\in (0,1)$ and $s\in (0,\psi (r)]$ such that
every mapping $T\in \Gamma _{0,s}$ has a fixed point.
\end{itemize}
\end{cor}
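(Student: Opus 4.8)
The plan is to obtain this corollary as the $b=0$ specialization of Theorem \ref{awara}, the key observation being that the substitution $\lambda=\frac{1}{b+1}=1$ is \emph{non-degenerate} throughout the argument of Theorem \ref{awara}, so the same three implications go through once one checks that every object here is literally the $b=0$ instance of its counterpart. First I would record the simplifications forced by $b=0$: one has $\lambda=1$, hence $T_\lambda=(1-\lambda)I+\lambda T=T$; the relation $r=\theta\lambda$ collapses to $r=\theta$; and $\psi(r)=\lambda f(r)=f(r)$. Condition $(a)$ of Theorem \ref{awara} then reads $s\|x-Tx\|\le\|x-y\|\Rightarrow\|Tx-Ty\|\le r\|x-y\|$, i.e. (\ref{bot1x})--(\ref{bot2x}); and since $((1-\lambda)I+\lambda T)(X)=T(X)$, conditions $(b)$ and $(c)$ here are exactly the $b=0$ instances of $(b)$, $(c)$ in Theorem \ref{awara}. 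Thus $\Gamma_{0,s}$ and $\Theta_{0,s}$ are the $b=0$ families $\Gamma_{b,s}$, $\Theta_{b,s}$, and statements $(i)$--$(iii)$ are verbatim those at $b=0$.

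For the two routine implications I would argue directly. For $(i)\Rightarrow(ii)$, if $X$ is Banach and $T\in\Gamma_{0,\psi(r)}=\Gamma_{0,f(r)}$, then $(a)$ is precisely the hypothesis (\ref{z1})--(\ref{z2}) of Corollary \ref{001}, so $T$ has a unique fixed point, for every $r\in[0,1)$. For $(ii)\Rightarrow(iii)$, I would use monotonicity of the families in $s$: when $s\le\psi(r)$ the premise $\psi(r)\|x-Tx\|\le\|x-y\|$ forces $s\|x-Tx\|\le\|x-y\|$, so $\Gamma_{0,s}\subseteq\Gamma_{0,\psi(r)}$, and hence any $r\in(0,1)$ with $s\in(0,\psi(r)]$ witnesses $(iii)$.

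The substantive direction is $(iii)\Rightarrow(i)$, which I would prove by contraposition, running the construction of Theorem \ref{awara} at $\lambda=1$. Assuming $X$ incomplete, fix a non-convergent Cauchy sequence $\{x_n\}$ and put $g(x)=\lim_n\|x-x_n\|$; then $g(x)>0$ for all $x$, $\lim_n g(x_n)=0$, and $|g(x)-g(y)|\le\|x-y\|\le g(x)+g(y)$. For each $x$ I would pick $x_{n_0}$ with $g(x_{n_0})\le\frac{s\theta\lambda}{3+s\theta\lambda}\,g(x)$ and set $Tx=x_{n_0}$; because $\lambda=1$ this coincides with $T_\lambda$, so $g(Tx)<g(x)$ gives $Tx\ne x$ (no fixed point), while $T(X)\subseteq\{x_n\}$ yields $(b)$ and $(c)$. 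The content lies in verifying $(a)$: given $s\|x-Tx\|\le\|x-y\|$, I would split into the cases $g(y)>2g(x)$ and $g(y)\le 2g(x)$ exactly as in Theorem \ref{awara}, obtaining $\|Tx-Ty\|\le\theta\lambda\|x-y\|=r\|x-y\|$ in each. This exhibits a fixed-point-free $T\in\Theta_{0,s}$, contradicting $(iii)$, so $X$ is Banach. The main obstacle -- and the only step needing genuine care -- is confirming the $\lambda=1$ degeneracy harms nothing: the definition $Tx=\frac{x_{n_0}-(1-\lambda)x}{\lambda}$ of Theorem \ref{awara} becomes simply $Tx=x_{n_0}$ with no division by zero, the factor $\frac{s\theta\lambda}{3+s\theta\lambda}$ remains in $(0,1)$, and the bound $\theta\lambda=r$ now applies to $T$ itself rather than to an average, so the two-case estimate for $(a)$ survives unchanged.
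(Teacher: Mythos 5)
Your proof is correct, but it is genuinely more self-contained than what the paper does. The paper offers no independent argument for this corollary: it is stated as the $b=0$ instance of Theorem \ref{awara}, and inside the proof of Theorem \ref{awara} the case $b=0$ never passes through the Case-1 machinery at all --- Case 1 explicitly assumes $b>0$, and Case 2 simply cites Theorem 4 of \cite{SUZUKI 2} (Suzuki's completeness characterization, applied to the norm metric). You instead rerun the Case-1 construction at the degenerate value $\lambda=1$: the map $Tx=\frac{x_{n_{0}}-(1-\lambda )x}{\lambda }$ collapses to $Tx=x_{n_{0}}$, the factor $\frac{s\theta \lambda }{3+s\theta \lambda }=\frac{sr}{3+sr}$ stays in $(0,1)$ because $(iii)$ supplies $r>0$ and $s>0$, and the two-case estimate ($g(y)>2g(x)$ versus $g(y)\leq 2g(x)$) goes through verbatim, giving $\left\Vert Tx-Ty\right\Vert \leq r\left\Vert x-y\right\Vert$ directly rather than for an averaged map. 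What your route buys is a proof of the corollary that does not lean on an external citation, and it incidentally repairs the paper's garbled displays in that argument (the paper writes $f$ where $g$ is meant, has $g(x)-g(y)$ where $g(x)+g(y)$ is intended, and drops an inequality sign in the estimate $\left\Vert x-y\right\Vert \geq t\left\Vert x-T_{\lambda }x\right\Vert$); the paper's route buys brevity. Two points you gloss, though at the same level of informality as the paper itself: condition $(b)$ requires $T(X)$ to be countably \emph{infinite}, which does follow since $g(Tx)<g(x)$ for all $x$ rules out a finite range (a finite $T(X)$ would contain a point of minimal positive $g$-value, contradicted by applying $T$ to it); and for the contradiction in $(iii)\Rightarrow (i)$ only membership in $\Gamma _{0,s}$, i.e. condition $(a)$, is actually needed, since $(iii)$ quantifies over $\Gamma _{0,s}$ rather than $\Theta _{0,s}$, so your verification of $(b)$ and $(c)$ is harmless surplus. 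Your $(i)\Rightarrow (ii)$ via Corollary \ref{001} and the monotonicity inclusion $\Gamma _{0,s}\subseteq \Gamma _{0,\psi (r)}$ for $s\leq \psi (r)$ are both sound and cleaner than the paper's corresponding (misprinted) inclusion statement.
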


\section{Multivalued Version}

Let $(X,d)$ be any metric space and $CB(X)$ the collection of all closed and
bounded subsets of $X.$ For $A,B\in CB(X),$ define
\begin{equation*}
D(A,B)=\sup_{a\in A}\{d(a,B)\},
\end{equation*}%
where $d(a,B)=\inf \{d(a,b)\ \ b\in B\}$ is the distance of the point $a$
from the set $B.$\newline
Define the functional $H:CB(X)\times CB(X)\rightarrow
\mathbb{R}
^{+}$ by
\begin{equation*}
H(A,B)=\max \big\{D(A,B),D(B,A)\big\}.
\end{equation*}%
\newline
The mapping $H$ is called Pompeiu-Hausdorff metric induced by $d.$ An
element $x\in X$ is called fixed point of $T,$ if $x\in T(x).$ The set of
all fixed points of a multivalued $T$ is denoted by $Fix(T),$ that is,
\begin{equation*}
Fix(T)=\{x\in X:x\in T(x)\}.
\end{equation*}

\begin{rem}
\label{remm} Let $M$ be a convex subset of a linear space $X$ and $%
T:M\rightarrow CB(M).$ Then for any $\lambda \in (0,1),$ consider a mapping $%
T_{\lambda }:M\rightarrow CB(M)$ is given by
\begin{eqnarray}
T_{\lambda }(x) &=&(1-\lambda )x+\lambda Tx  \label{r} \\
&=&\{(1-\lambda )x+\lambda s:s\in Tx\}.
\end{eqnarray}%
In other words, for each $x$ in $M,$ $T_{\lambda }(x)$ is the translation of
the set $\lambda Tx$ by the vector $(1-\lambda )x.$ Clearly,
\begin{equation*}
Fix(T_{\lambda })=Fix(T).
\end{equation*}%
Indeed, if $p\in Tp$, then $p=(1-\lambda )p+\lambda p\in T_{\lambda }p.$ On
the other hand, if $p\in T_{\lambda }p,$ then for some $s\in Tp,$ we have $%
p=(1-\lambda )p+\lambda s$ which further implies that $s=p.$
\end{rem}

\begin{thm}
\label{sds} Let $(X,||.||)$ be a Banach space and $T:X\rightarrow CB(X)$ be
multivalued map. If there exists $b\in [0,\infty)$ and $\theta\in [0,b+1)$
with $\lambda=\frac{1}{b+1}$ such that for any $x,y\in X$
\begin{equation*}
\psi(r)d(x,Tx)\leq \left\Vert x-y\right\Vert\ \ \ implies \ \ \
H(bx+Tx,by+Ty)\leq \theta \left\Vert x-y\right\Vert,
\end{equation*}%
where $\theta\lambda=r$ and $\psi$ be a strictly decreasing function from $%
[0,1)$ to $[0,1)$ by $\psi(r)=\frac{\lambda}{1+r}.$ Then $Fix(T)$ is
nonempty.
\end{thm}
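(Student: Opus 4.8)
The plan is to eliminate the Berinde displacement $b$ by passing to the averaged map, thereby reducing the hypothesis to a Kikkawa--Suzuki type multivalued contraction, and then to produce a fixed point by a controlled iteration. As in Remark \ref{remm}, set $\lambda=\frac{1}{b+1}\in(0,1]$ and $T_\lambda x=(1-\lambda)x+\lambda Tx$, so that $Fix(T_\lambda)=Fix(T)$ and $T_\lambda\colon X\to CB(X)$ is well defined, an affine image of a closed bounded set being closed and bounded. The reduction rests on three elementary identities. From $1-\lambda=b\lambda$ one gets $T_\lambda x=\lambda(bx+Tx)$, whence, by positive homogeneity of the Pompeiu--Hausdorff metric, $H(bx+Tx,by+Ty)=\tfrac1\lambda H(T_\lambda x,T_\lambda y)$; and since each point of $T_\lambda x$ is $(1-\lambda)x+\lambda s$ with $s\in Tx$, one has $d(x,T_\lambda x)=\lambda\,d(x,Tx)$. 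Substituting these, with $\psi(r)=\frac{\lambda}{1+r}$ and $r=\theta\lambda$, the hypothesis becomes exactly
\[
\tfrac{1}{1+r}\,d(x,T_\lambda x)\le\|x-y\|\ \Longrightarrow\ H(T_\lambda x,T_\lambda y)\le r\|x-y\|,\qquad x,y\in X .
\]
When $b=0$ we have $\lambda=1$, $T_\lambda=T$, $\theta=r$, and this reduction is vacuous.

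Next I would construct the iterate. If $r=0$, then for any $u_1\in T_\lambda u_0$ the premise holds and forces $d(u_1,T_\lambda u_1)=0$, so $u_1\in T_\lambda u_1$; hence I may assume $0<r<1$ and fix $h\in(r,1)$. Starting from $u_0$ and $u_1\in T_\lambda u_0$, the premise $\frac{1}{1+r}d(u_{n-1},T_\lambda u_{n-1})\le\|u_{n-1}-u_n\|$ holds automatically, since $u_n\in T_\lambda u_{n-1}$ and $\tfrac1{1+r}\le1$; therefore $H(T_\lambda u_{n-1},T_\lambda u_n)\le r\|u_{n-1}-u_n\|$ and so $d(u_n,T_\lambda u_n)\le r\|u_{n-1}-u_n\|<h\|u_{n-1}-u_n\|$. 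By definition of the infimum I may then pick $u_{n+1}\in T_\lambda u_n$ with $\|u_n-u_{n+1}\|\le h\|u_{n-1}-u_n\|$ (and if ever $u_{n-1}=u_n$, that point is already fixed). This gives geometric decay, so $\{u_n\}$ is Cauchy and, $X$ being a Banach space, converges to some $z$.

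The decisive and hardest step is to show $z\in T_\lambda z$; the obstacle is that the Suzuki premise $\frac{1}{1+r}d(z,T_\lambda z)\le\|z-y\|$ cannot be checked by letting $y\to z$. I would bypass this with the standard alternative: for every $n$ at least one of
\[
\tfrac{1}{1+r}d(u_n,T_\lambda u_n)\le\|u_n-z\|\quad\text{or}\quad\tfrac{1}{1+r}d(u_{n+1},T_\lambda u_{n+1})\le\|u_{n+1}-z\|
\]
holds; for if both failed, the triangle inequality together with $d(u_n,T_\lambda u_n)\le\|u_n-u_{n+1}\|$ and $d(u_{n+1},T_\lambda u_{n+1})\le H(T_\lambda u_n,T_\lambda u_{n+1})\le r\|u_n-u_{n+1}\|$ would give $\|u_n-u_{n+1}\|<\|u_n-u_{n+1}\|$, a contradiction. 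Applying the contractive conclusion with $y=z$ to whichever alternative holds, and using $u_{m+1}\in T_\lambda u_m$, produces infinitely many indices $m$ with $d(u_m,T_\lambda z)\le r\|u_{m-1}-z\|$. Letting $m\to\infty$ along these indices, continuity of $s\mapsto d(s,T_\lambda z)$ and $u_m\to z$ yield $d(z,T_\lambda z)\le0$; since $T_\lambda z$ is closed, $z\in T_\lambda z$, that is $z\in Fix(T_\lambda)=Fix(T)$. Alternatively, once the displayed reduced implication is established, one may simply invoke Theorem $2$ of \cite{KIKKAWA} with $\eta(r)=\frac1{1+r}$ to obtain a fixed point of $T_\lambda$ and conclude by Remark \ref{remm}.
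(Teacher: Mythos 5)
Your proposal is correct and follows essentially the same route as the paper: the identical reduction $T_\lambda x=\lambda(bx+Tx)$, $d(x,T_\lambda x)=\lambda\,d(x,Tx)$, $H(bx+Tx,by+Ty)=\tfrac{1}{\lambda}H(T_\lambda x,T_\lambda y)$ turns the hypothesis into the Kikkawa--Suzuki condition with $\eta(r)=\tfrac{1}{1+r}$, after which the paper (in both cases $b>0$ and $b=0$) simply invokes Theorem $2$ of \cite{KIKKAWA} together with $Fix(T_\lambda)=Fix(T)$ from Remark \ref{remm} --- exactly your closing alternative. The only difference is that you additionally re-prove the Kikkawa--Suzuki step from scratch (geometric iteration plus the two-alternatives argument at the limit point), and that argument is correct, making your write-up self-contained where the paper relies on the citation.
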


\begin{proof}
We divide the proof into the following two cases.\newline
\textsc{Case 1.} Suppose that $b>0.$ Clearly, $0<\lambda <1.$ We can write $%
\psi (r)=\lambda \eta (r),$ where $\eta $ is a strictly decreasing function
from $[0,1)$ onto $(\frac{1}{2},1]$ given by $\eta (r)=\frac{1}{1+r}.$ In
this case, an implicative contractive condition in the statement of the
Theorem reduces to the following form;
\begin{equation*}
\eta (r)d(x,T_{\lambda }x)\leq \left\Vert x-y\right\Vert \ \ \text{implies
that}\ \ H(T_{\lambda }x,T_{\lambda }y)\leq r\left\Vert x-y\right\Vert .
\end{equation*}%
Clearly $T_{\lambda }$ satisfies all the conditions of Theorem $2$ of \cite%
{KIKKAWA}. Hence $Fix(T)\neq \emptyset .$ \newline
\textsc{Case 2.} Suppose that $b=0.$ In this case, we have $\lambda =1$ and $%
\theta =r.$ Then $Fix(T)\neq \emptyset $ by Theorem $2$ of \cite{KIKKAWA}.
\end{proof}

\begin{thm}
\label{ghgh} Let $(X,||.||)$ be a Banach space and $T:X\rightarrow CB(X)$.
If there exists $b\in \lbrack 0,\infty ),$ $\gamma \in (0,1)$ and $\theta
\in \lbrack 0,b+1)$ with $\lambda =\frac{1}{b+1}$ such that $(\theta \lambda
+1)\gamma \leq 1$ and for any $x,y\in X$
\begin{equation*}
\gamma \lambda d(x,Tx)\leq \left\Vert x-y\right\Vert \ \text{implies that}\
H(bx+Tx,by+Ty)\leq \theta \left\Vert x-y\right\Vert .
\end{equation*}%
Then $Fix(T)$ is nonempty.
\end{thm}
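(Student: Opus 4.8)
The plan is to recognize that Theorem \ref{ghgh} is really a strengthening of the hypotheses of Theorem \ref{sds}, and then to conclude by a direct appeal to that already-established result together with Remark \ref{remm}. The only feature that distinguishes the present statement from Theorem \ref{sds} is the extra constant $\gamma$ subject to the side condition $(\theta\lambda+1)\gamma\le 1$; the parameters $b,\theta,\lambda$ and the relation $r=\theta\lambda$ are set up identically in both theorems, so no new machinery is required.

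First I would record the bookkeeping on the constants. With $\lambda=\frac{1}{b+1}$ and $\theta\in[0,b+1)$ we have $r=\theta\lambda\in[0,1)$, exactly the range demanded by Theorem \ref{sds}. Next I would unpack the standing constraint $(\theta\lambda+1)\gamma\le 1$: since $\theta\lambda=r$, this says precisely $\gamma\le\frac{1}{1+r}$, and multiplying through by $\lambda>0$ yields
\begin{equation*}
\gamma\lambda\le\frac{\lambda}{1+r}=\psi(r),
\end{equation*}
where $\psi(r)=\frac{\lambda}{1+r}$ is the function used in Theorem \ref{sds}. This single inequality is the engine of the whole argument.

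With $\gamma\lambda\le\psi(r)$ in hand, I would check that $T$ satisfies the contractive implication of Theorem \ref{sds}. Fix $x,y\in X$ with $\psi(r)\,d(x,Tx)\le\|x-y\|$. Because $\gamma\lambda\le\psi(r)$ and $d(x,Tx)\ge 0$, multiplying preserves the inequality and gives $\gamma\lambda\,d(x,Tx)\le\psi(r)\,d(x,Tx)\le\|x-y\|$. Thus the antecedent of the hypothesis of Theorem \ref{ghgh} is met, and that hypothesis delivers $H(bx+Tx,by+Ty)\le\theta\|x-y\|$, which is exactly the conclusion required by Theorem \ref{sds}. Hence $T$ falls within the class covered by Theorem \ref{sds}, and that theorem yields $Fix(T)\neq\emptyset$. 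I would note that this reduction is uniform in $b\in[0,\infty)$: when $b=0$ one has $\lambda=1$, $\psi(r)=\frac{1}{1+r}$, and the inequality $\gamma\le\psi(r)$ persists, so no separate case split is needed on my side (Theorem \ref{sds} already absorbs both cases).

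The main obstacle—modest as it is here—will be getting the direction of the inequality right. The point is that $\gamma\lambda\le\psi(r)$ makes the antecedent $\psi(r)\,d(x,Tx)\le\|x-y\|$ of Theorem \ref{sds} the more restrictive of the two, so that whenever it holds, our weaker antecedent $\gamma\lambda\,d(x,Tx)\le\|x-y\|$ is automatically satisfied; reversing the comparison would make the implication go the wrong way. Once this monotonicity is pinned down, the proof is a one-line invocation of Theorem \ref{sds}, resting ultimately on the identity $Fix(T_\lambda)=Fix(T)$ from Remark \ref{remm} that powers that theorem.
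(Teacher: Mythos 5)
Your proposal is correct, but it takes a genuinely different route from the paper. The paper proves Theorem \ref{ghgh} the same way it proves Theorem \ref{sds}: it passes to the averaged operator $T_\lambda=(1-\lambda)I+\lambda T$, using the identities $d(x,T_\lambda x)=\lambda\, d(x,Tx)$ and $H(T_\lambda x,T_\lambda y)=\lambda\, H(bx+Tx,by+Ty)$ to rewrite the hypothesis as
\begin{equation*}
\gamma\, d(x,T_\lambda x)\leq \left\Vert x-y\right\Vert \ \text{ implies }\ H(T_\lambda x,T_\lambda y)\leq \theta\lambda \left\Vert x-y\right\Vert ,
\end{equation*}
and then invokes Theorem $2.1$ of \cite{Beg} applied to $T_\lambda$ (with the usual case split $b>0$ versus $b=0$, and Remark \ref{remm} to carry the fixed point back to $T$). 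You instead observe that the side condition $(\theta\lambda+1)\gamma\leq 1$, with $r=\theta\lambda$, yields $\gamma\lambda\leq \frac{\lambda}{1+r}=\psi(r)$, so the hypothesis of Theorem \ref{ghgh} (whose antecedent is weaker, hence whose implication is stronger) forces the hypothesis of Theorem \ref{sds} to hold with the same $b$ and $\theta$; you then conclude by citing Theorem \ref{sds}. Your monotonicity step is sound and the inequality runs in the right direction, and since Theorem \ref{sds} already covers $b=0$, your avoidance of a case split is legitimate. What your route buys: it is entirely internal to the paper (no appeal to \cite{Beg} at all), and it exposes the logical fact that Theorem \ref{ghgh} is formally a corollary of Theorem \ref{sds} --- which mirrors, at the level of the averaged maps, the fact that a Suzuki-type condition with constant $\gamma\leq\frac{1}{1+r}$ is subsumed by the Kikkawa--Suzuki condition with constant $\frac{1}{1+r}$. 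What the paper's route buys: it keeps Theorem \ref{ghgh} as the exact enriched analogue of Theorem $2.1$ of \cite{Beg}, so that setting $b=0$ visibly recovers that theorem, preserving the parallel with the cited literature that the authors emphasize in their closing remark.
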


\begin{proof}
We divide the proof into the following two cases.\newline
\textsc{Case 1.} Suppose that $b>0.$ Clearly, $0<\lambda <1.$ In this case,
an implicative contractive condition in the statement of the Theorem reduces
to the following form;
\begin{equation*}
\gamma d(x,T_{\lambda }x)\leq \left\Vert x-y\right\Vert \ \text{implies that}%
\ H(T_{\lambda }x,T_{\lambda }y)\leq \theta \lambda \left\Vert
x-y\right\Vert .
\end{equation*}%
Clearly $T_{\lambda }$ satisfies all the conditions of Theorem $2.1$ of \cite%
{Beg}. Hence $Fix(T)\neq \emptyset .$ \newline
\textsc{Case 2.} Suppose that $b=0.$ In this case, we have $\lambda =1.$
Then $Fix(T)\neq \emptyset $ by Theorem $2.1$ of \cite{Beg}.
\end{proof}

\begin{thm}
\label{TR} Let $(X,||.||)$ be a compact normed space and $T:X\rightarrow
CB(X)$. If there exists $b\in \lbrack 0,\infty )$ and $\gamma \in (0,\frac{1%
}{2}]$ with $\lambda =\frac{1}{b+1}$ such that for any $x,y\in X$
\begin{equation*}
\gamma \lambda d(x,Tx)<\left\Vert x-y\right\Vert \ \ \text{implies\ that}\
H(bx+Tx,by+Ty)<\left\Vert x-y\right\Vert .
\end{equation*}%
Then $Fix(T)$ is nonempty.
\end{thm}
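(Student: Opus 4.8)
The plan is to follow the reduction scheme already used in the proofs of Theorems \ref{main}, \ref{sds} and \ref{ghgh}: split into the cases $b>0$ and $b=0$, and in each case pass to the averaged multivalued map $T_\lambda x=(1-\lambda)x+\lambda Tx$ of Remark \ref{remm}. The stated implication should then collapse onto a multivalued Suzuki-type condition on a compact space, i.e. a set-valued analogue of Theorem \ref{101}, for which a fixed point theorem is available. Since $Fix(T_\lambda)=Fix(T)$ by Remark \ref{remm}, exhibiting a fixed point of $T_\lambda$ will suffice.

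First I would record the two identities that drive the reduction. Writing $T_\lambda x=\{(1-\lambda)x+\lambda s:s\in Tx\}$ and using $\|x-[(1-\lambda)x+\lambda s]\|=\lambda\|x-s\|$ gives the point-to-set identity
\begin{equation*}
d(x,T_\lambda x)=\lambda\,d(x,Tx).
\end{equation*}
Since $\lambda=\frac{1}{b+1}$ forces $1-\lambda=b\lambda$, we also have $T_\lambda x=\lambda(bx+Tx)$, so the positive homogeneity of the Pompeiu-Hausdorff metric, $H(\lambda A,\lambda B)=\lambda H(A,B)$, yields
\begin{equation*}
H(T_\lambda x,T_\lambda y)=\lambda\,H(bx+Tx,by+Ty).
\end{equation*}
Substituting these, the premise $\gamma\lambda\,d(x,Tx)<\|x-y\|$ becomes $\gamma\,d(x,T_\lambda x)<\|x-y\|$ and the conclusion $H(bx+Tx,by+Ty)<\|x-y\|$ becomes $H(T_\lambda x,T_\lambda y)<\lambda\|x-y\|<\|x-y\|$.

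Hence in Case $b>0$ the map $T_\lambda$ obeys: $\gamma\,d(x,T_\lambda x)<\|x-y\|$ implies $H(T_\lambda x,T_\lambda y)<\|x-y\|$. The hypothesis $\gamma\in(0,\tfrac12]$ is precisely what lets this feed a theorem stated with the constant $\tfrac12$: whenever $\tfrac12\,d(x,T_\lambda x)<\|x-y\|$ we get $\gamma\,d(x,T_\lambda x)\le\tfrac12\,d(x,T_\lambda x)<\|x-y\|$, so the premise fires and the contraction $H(T_\lambda x,T_\lambda y)<\|x-y\|$ follows. Applying the multivalued compact-space theorem to $T_\lambda$ then gives $Fix(T_\lambda)=Fix(T)\neq\emptyset$. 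In Case $b=0$ we have $\lambda=1$ and $bx+Tx=Tx$, so the hypothesis already reads $\gamma\,d(x,Tx)<\|x-y\|\Rightarrow H(Tx,Ty)<\|x-y\|$ and the same theorem applies directly.

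I expect the genuine obstacle to be the set-valued compact-space result itself, namely that $S:X\to CB(X)$ on a compact normed space with $\tfrac12\,d(x,Sx)<\|x-y\|\Rightarrow H(Sx,Sy)<\|x-y\|$ has a fixed point, since unlike the other multivalued theorems here this one has no ready citation in the excerpt. If no clean reference is available I would establish it directly by imitating the proof of Theorem \ref{main}. Put $\beta=\inf_{x\in X}d(x,Sx)$; as each $Sx_n$ is a closed subset of the compact space $X$ the infimum defining $d(x_n,Sx_n)$ is attained at some selection $s_n\in Sx_n$, and compactness lets me assume $x_n\to v$ and $s_n\to w$ along a subsequence. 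Using $d(w,Sw)\le\|w-s_n\|+H(Sx_n,Sw)$ together with the contractive implication I would show $\beta=0$, and then recover a point with $d(v,Sv)=0$, hence $v\in Sv$, through the same two-alternative Suzuki dichotomy that appears in Theorem \ref{main}. The real work is the set-valued bookkeeping with $H$ and selections rather than any new idea.
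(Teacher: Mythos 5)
Your proposal is correct and follows essentially the paper's own route: the paper makes exactly your reduction (via Remark \ref{remm}, the condition becomes $\gamma\, d(x,T_{\lambda}x)<\Vert x-y\Vert \Rightarrow H(T_{\lambda}x,T_{\lambda}y)<\lambda\Vert x-y\Vert<\Vert x-y\Vert$) and then, in both cases $b>0$ and $b=0$, invokes Theorem $2.3$ of \cite{Beg}, which is precisely the compact-space set-valued Suzuki theorem with constant $\gamma\in(0,\frac{1}{2}]$ that you believed lacked a ready citation. Your fallback plan of reproving that result by imitating Theorem \ref{main} with nearest-point selections is therefore unnecessary (though the sketch is sound); the citation closes the argument.
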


\begin{proof}
We divide the proof into the following two cases.\newline
\textsc{Case 1.} Suppose that $b>0.$ Clearly, $0<\lambda <1.$ In this case,
an implicative contractive condition in the statement of the Theorem reduces
to the following form
\begin{equation*}
\gamma d(x,T_{\lambda }x)<\left\Vert x-y\right\Vert \ \ \text{implies that}\
\ H(T_{\lambda }x,T_{\lambda }y)\leq \lambda \left\Vert x-y\right\Vert
<\left\Vert x-y\right\Vert .
\end{equation*}%
Since $\lambda <1.$ Clearly $T_{\lambda }$ satisfies all the conditions of
Theorem $2.3$ of \cite{Beg}. Hence $Fix(T)\neq \emptyset .$ \newline
\textsc{Case 2.} Suppose that $b=0.$ In this case, we have $\lambda =1.$
Then $Fix(T)\neq \emptyset $ by Theorem $2.3$ of \cite{Beg}.
\end{proof}

\begin{rem}
\ \newline

\begin{enumerate}
\item If we take $T$ a single valued mapping in the Theorem \ref{sds} and
Theorem \ref{TR} then we obtain particular case of Theorem \ref{bot} and
Theorem \ref{main}, respectively.

\item If we put b = 0 in Theorems \ref{sds}, \ref{ghgh} and \ref{TR}, then
we can obtain Theorem $2$ of \cite{KIKKAWA}, Theorem $2.1$ and Theorem $2.3$
of \cite{Beg}, respectively, in the setting of Banach space.
\end{enumerate}
\end{rem}

\end{document}